\newcommand{\QQ}{\mathbb{Q}}
\newcommand{\kk}{\Bbbk}
\newcommand{\MC}{\mathsf{MC}}
\newcommand{\MCQ}{\widetilde{\mathsf{MC}}}
\newcommand{\gauge}{\mathsf{G}}
\renewcommand{\exp}{\mathsf{exp}}
\newcommand{\glike}{\mathcal{G}}
\newcommand{\prim}{\mathcal{P}}
\newcommand{\Kangrp}{G}
\newcommand{\sgn}{\operatorname{sgn}}
\newcommand{\DGL}{\mathsf{DGL}}
\newcommand{\Top}{\mathsf{Top}}
\newcommand{\cU}{\widehat{U}}
\newcommand{\cQQ}{\widehat{\QQ}}
\newcommand{\horb}{\mathbin{/\mkern-6mu/}}
\newcommand{\ad}{\operatorname{ad}}
\newcommand{\tensor}{\otimes}
\newcommand{\ctensor}{\widehat{\otimes}}
\newcommand{\CC}{\mathcal{C}}
\newcommand{\vol}[1]{\omega^{#1}}
\newcommand{\volo}[1]{\widehat{\omega}^{#1}}
\newtheorem{theorem}{Theorem}[section]
\newtheorem{proposition}[theorem]{Proposition}
\newtheorem{corollary}[theorem]{Corollary}
\newtheorem{lemma}[theorem]{Lemma}
\theoremstyle{definition}
\newtheorem{definition}[theorem]{Definition}
\newtheorem{remark}[theorem]{Remark}
\title{On exponential groups and Maurer--Cartan spaces}
\author{Alexander Berglund}
\begin{document}

\maketitle

\begin{abstract}
The purpose of this note is to give a concise account of some fundamental properties of the exponential group and the Maurer--Cartan space associated to a complete dg Lie algebra. In particular, we give a direct elementary proof that the Maurer--Cartan space is a delooping of the exponential group. This leads to a short proof that the Maurer--Cartan space functor is homotopy inverse to Quillen's functor from simply connected pointed spaces to positively graded dg Lie algebras.
\end{abstract}

\section{Introduction}
The Maurer--Cartan space, or nerve, $\MC_\bullet(L)$ of a dg Lie algebra $L$, introduced in the context of deformation theory by Hinich \cite{Hinich} and studied extensively by Getzler \cite{Getzler}, has come to play a significant role in rational homotopy theory (see e.g.~\cite{B, LM, BFMT, RNV}).

The main purpose of this note is to record an elementary proof that the Maurer--Cartan space $\MC_\bullet(L)$ is a delooping of the exponential group $\exp_\bullet(L)$ for arbitrary complete dg Lie algebras $L$. As a corollary, we observe that this leads to a quick proof that the Maurer--Cartan space functor is inverse to Quillen's equivalence $\lambda$ from the rational homotopy category of simply connected pointed spaces to the homotopy category of positively graded dg Lie algebras \cite{Quillen}. This recovers the main result of \cite{FFM} and leads to a new, more direct, proof of the Baues--Lemaire conjecture \cite[Conjecture 3.5]{BL}, first solved by Majewski \cite{Majewski}.

Along the way, we give streamlined proofs of other fundamental properties, e.g., that $\MC_\bullet(-)$ takes surjections to Kan fibrations, that gauge equivalence agrees with homotopy equivalence, that $\MC_\bullet(L)$ is equivalent to the nerve of the Deligne groupoid for non-positively graded $L$, and we give an efficient computation of the homotopy groups together with Whitehead products and the action of the fundamental group.

\section{Complete dg Lie algebras}
Let $\kk$ be a field of characteristic zero. By a \emph{cochain complex} we mean an unbounded cochain complex $V$ of $\kk$-vector spaces,
$$\cdots \to V^{-1} \to V^0 \to V^1 \to \cdots.$$
We use the standard convention $V_k = V^{-k}$ to view a cochain complex as a chain complex when convenient. A \emph{complete filtered cochain complex} is a cochain complex $V$ equipped with a filtration of subcomplexes
$$V = F^1V \supseteq F^2V \supseteq \ldots$$
such that the canonical map $V \to \varprojlim V/F^n V$ is an isomorphism.

A \emph{complete dg Lie algebra} is a complete filtered cochain complex $L$ together with a Lie bracket $L\tensor L \to L$ that is compatible with the filtration in the sense that $[F^pL,F^qL] \subseteq F^{p+q}L$ for all $p,q$. Nilpotent dg Lie algebras equipped with the lower central series filtration are basic examples of complete dg Lie algebras.

Let $\MC(L)$ be the set of elements $\tau\in L^1$ that satisfy the Maurer-Cartan equation
$$d\tau + \tfrac{1}{2}[\tau,\tau] = 0.$$
Setting $d_\tau = d + \ad_\tau$, where $\ad_\tau(x) = [\tau,x]$, this equation is equivalent to
$$d_\tau^2 = 0.$$
We let $L_\tau$ be the dg Lie algebra with underlying Lie algebra $L$ and differential $d_\tau$.

Let $\gauge(L)$ denote the group $\exp(L^0)$, i.e., the group with underlying set $L^0$ and multiplication given by the Baker--Campbell--Hausdorff formula. The \emph{gauge action} of $x\in\gauge(L)$ on $\tau\in\MC(L)$ is given by the well-known formula
\begin{equation*}
x\cdot \tau = \tau - \sum_{k\geq 0} \tfrac{1}{(k+1)!}\ad_x^{k}(d_\tau x).
\end{equation*}
A direct verification that this defines a group action can be found in e.g.~\cite[\S4.3]{BFMT}.

\begin{lemma}\label{lemma:stabilizer}
The stabilizer $G(L)_\tau$ of a Maurer--Cartan element $\tau$ in $L$ under the gauge action may be identified with the subgroup
$$\exp(Z^0L_\tau) \subseteq \exp(L^0).$$
\end{lemma}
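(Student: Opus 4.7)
The plan is to rewrite the gauge action formula in operator form so that the stabilizer condition becomes a linear one. Setting $\Phi(t) = \sum_{k\geq 0} \frac{t^k}{(k+1)!} = \frac{e^t-1}{t}$, the displayed gauge formula reads
$$x\cdot\tau - \tau = -\Phi(\ad_x)\bigl(d_\tau x\bigr),$$
so $x$ stabilizes $\tau$ if and only if $\Phi(\ad_x)(d_\tau x)=0$.

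The key step is to observe that $\Phi(\ad_x)$ is an invertible endomorphism of $L$. Since $x\in L^0 = F^1L^0$, the compatibility $[F^pL,F^qL]\subseteq F^{p+q}L$ ensures that $\ad_x$ strictly increases the filtration. Writing $\Phi(\ad_x) = 1 + N$ with $N = \sum_{k\geq 1}\tfrac{1}{(k+1)!}\ad_x^k$ filtration-increasing, completeness of $L$ makes the Neumann series $\sum_{j\geq 0}(-N)^j$ converge to an inverse of $1+N$. Applying this inverse, the equation $\Phi(\ad_x)(d_\tau x)=0$ reduces to the linear equation $d_\tau x = 0$, which says precisely that $x\in Z^0L_\tau$.

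It remains to check that $\exp(Z^0L_\tau)\subseteq\exp(L^0)$ is actually a subgroup, not merely a subset. The Maurer--Cartan equation for $\tau$ implies $d_\tau^2=0$, and since $\ad_\tau$ is a derivation of $[-,-]$, so is $d_\tau$. Hence $L_\tau$ is a dg Lie algebra and $Z^0L_\tau\subseteq L^0$ is a Lie subalgebra, complete for the induced filtration. The Baker--Campbell--Hausdorff product of two elements of $\exp(Z^0L_\tau)$ therefore stays inside $\exp(Z^0L_\tau)$, and combined with the previous paragraph this identifies $\gauge(L)_\tau$ with $\exp(Z^0L_\tau)$.

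I do not foresee a substantial obstacle: the only technical input is the invertibility of $\Phi(\ad_x)$, which is automatic from completeness and from $x$ lying in $F^1L^0$; everything else is formal.
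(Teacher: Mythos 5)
Your proof is correct and is essentially the paper's argument in different packaging: the paper shows that $x\cdot\tau=\tau$ forces $d_\tau x = -N(d_\tau x)$ and deduces $d_\tau x\in F^kL$ for all $k$ by induction, which is precisely your Neumann-series inversion of $1+N$ using that $\ad_x$ raises filtration and $L$ is complete. The extra check that $\exp(Z^0L_\tau)$ is closed under the BCH product is a harmless addition the paper leaves implicit.
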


\begin{proof}
If $d_\tau x = 0$, then clearly $x\cdot \tau = \tau$.  Conversely, if $x\cdot \tau = \tau$, then
$$d_\tau x = - \tfrac{1}{2!}[x,d_\tau x] - \tfrac{1}{3!}[x,[x,d_\tau x ]] - \ldots,$$
which shows that $d_\tau x\in F^kL$ implies $d_\tau x \in F^{k+1}L$. Hence, $d_\tau x \in F^kL$ for all $k$ by induction, so $d_\tau x = 0$ by completeness.
\end{proof}
We denote the set of gauge equivalence classes of Maurer--Cartan elements by
$$\MCQ(L) = \MC(L)/\gauge(L).$$
The only non-trivial fact we will need about $\MCQ(L)$ is the following light version of the Goldman--Millson theorem \cite{GM}. We give a short proof for completeness.

\begin{lemma} \label{lemma:GM light}
If $f\colon L\to L'$ is a surjective morphism of complete dg Lie algebras such that $H^1(F^nI/F^{n+1}I) = 0$, where $I$ is the kernel of $f$ and $F^nI = F^nL\cap I$, then the map $\MCQ(L)\to \MCQ(L')$ induced by $f$ is injective.
\end{lemma}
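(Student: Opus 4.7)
The plan is to show that if $\tau_0,\tau_1\in\MC(L)$ satisfy $f(\tau_0)\sim f(\tau_1)$ in $\MCQ(L')$, then $\tau_0\sim\tau_1$ in $\MCQ(L)$. The argument has two parts: a reduction step, and a successive approximation inside the kernel ideal.

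For the reduction, since $f\colon L\to L'$ is surjective, so is $f\colon L^0\to (L')^0$, and BCH is a polynomial expression, so the induced group map $\gauge(L)\to\gauge(L')$ is surjective. Lifting the given gauge equivalence in $L'$ to some $x\in\gauge(L)$ and replacing $\tau_0$ by $x\cdot\tau_0$, we may assume $f(\tau_0)=f(\tau_1)$, i.e.\ $\eta:=\tau_1-\tau_0\in I^1 = F^1I^1$.

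The core of the proof is to construct inductively a sequence of gauge transformations $x_n\in\exp(I^0)$ (with $x_0=0$) such that $\tau^{(n)}:=x_n\cdot\tau_0$ satisfies $\tau^{(n)}-\tau_1\in F^nI^1$, and such that $x_{n+1}$ differs from $x_n$ by an element of $F^nI^0$. For the inductive step, write $\eta_n = \tau^{(n)}-\tau_1\in F^nI^1$. Subtracting the MC equations for $\tau^{(n)}$ and $\tau_1$ and rearranging yields
$$d\eta_n + [\tau_1,\eta_n] + \tfrac12[\eta_n,\eta_n]=0.$$
Since $\tau_1\in F^1L$ and $\eta_n\in F^nI$, the last two terms lie in $F^{n+1}I$, so $d\eta_n\in F^{n+1}I$. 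Thus $\eta_n$ represents a class in $H^1(F^nI/F^{n+1}I)$, which vanishes by hypothesis. Hence there exists $y_n\in (F^nI)^0$ with $\eta_n \equiv dy_n \pmod{F^{n+1}I}$. Now compute
$$y_n\cdot\tau^{(n)} = \tau^{(n)} - d_{\tau^{(n)}}y_n - \sum_{k\geq 1}\tfrac{1}{(k+1)!}\ad_{y_n}^k(d_{\tau^{(n)}}y_n).$$
The tail lies in $F^{2n}I\subseteq F^{n+1}I$, and $d_{\tau^{(n)}}y_n = dy_n + [\tau^{(n)},y_n]\equiv \eta_n\pmod{F^{n+1}I}$, so $y_n\cdot\tau^{(n)} \equiv \tau^{(n)}-\eta_n = \tau_1 \pmod{F^{n+1}I}$. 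Setting $x_{n+1}$ to be the BCH product of $y_n$ and $x_n$ (which remains in $\exp(I^0)$ since $I$ is a Lie ideal) completes the induction, and the BCH expansion shows $x_{n+1}-x_n\in F^nI$.

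Finally, because $I$ is complete for the induced filtration (an elementary $\varprojlim$ argument using the short exact sequence $0\to I/F^nI\to L/F^nL\to L'/F^nL'\to 0$), the Cauchy sequence $\{x_n\}$ converges to some $x\in I^0$. Continuity of the gauge action then forces $x\cdot\tau_0=\tau_1$, proving injectivity of $\MCQ(L)\to\MCQ(L')$. The step I expect to be most delicate is the cohomological one: verifying that $\eta_n$ is a $d$-cocycle modulo $F^{n+1}I$, which is where both MC equations and the compatibility $[F^1L,F^nI]\subseteq F^{n+1}I$ are essential; everything else is formal convergence inside the complete ideal $I$.
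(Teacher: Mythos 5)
Your proposal is correct and follows essentially the same successive-approximation argument as the paper: at each stage the discrepancy $\tau^{(n)}-\tau_1$ is a degree-one cocycle in $F^nI/F^{n+1}I$, the hypothesis $H^1(F^nI/F^{n+1}I)=0$ supplies a gauge correction pushing it into $F^{n+1}I$, and completeness gives convergence. The only cosmetic differences are that you first reduce to $f(\tau_0)=f(\tau_1)$ and compose corrections via BCH inside $\exp(I^0)$, whereas the paper lifts the gauge element $y$ directly and adds the corrections $c$ to $x_n$ additively.
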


\begin{proof}
Suppose $\tau,\rho \in \MC(L)$ and $f(\tau) = y \cdot f(\rho)$ for $y\in (L')^0$. Pick $x_1\in L^0$ with $f(x_1) = y$ and assume by induction that we have found $x_1,\ldots,x_n\in L^0$ satisfying
\begin{enumerate}
\item $x_k\equiv x_{k-1}  \pmod{F^{k-1}I}$,
\item $\tau \equiv x_k\cdot \rho \pmod{F^kI}$,
\end{enumerate}
for all $k\leq n$. Then (2) for $k=n$ implies that
$$d\tau = -\tfrac{1}{2}[\tau,\tau] \equiv
-\tfrac{1}{2}[x_n\cdot \rho , x_n\cdot \rho]  = d(x_n\cdot\rho) \quad \pmod{F^{n+1}I}.$$
Thus, $\tau-x_n\cdot \rho$ represents a cocycle in $F^nI/F^{n+1}I$. Since $H^1(F^nI/F^{n+1}I) = 0$, we can find $c\in F^nI$ such that
$$\tau \equiv  x_n\cdot \rho - dc \equiv (x_n+c)\cdot \rho \pmod{F^{n+1}I},$$
where the last congruence follows since $c\in F^nI$. Setting $x_{n+1} = x_n + c$ finishes the induction. By completeness, this defines an element $x\in L^0$ such that $\tau = x\cdot \rho$.
\end{proof}

\section{Polynomial differential forms}
Let $\Omega_\bullet$ be the simplicial commutative dg algebra of polynomial differential forms. In level $n$, it is given by
$$\Omega_n = \kk\big[t_0,\ldots,t_n,dt_0,\ldots,dt_n\big]/I$$
where $t_i$ has degree 0 and $I$ is the dg ideal generated by $t_0+\ldots+t_n-1$. The face maps $\partial_i\colon \Omega_n\to \Omega_{n-1}$ and the degeneracy maps $s_i\colon \Omega_n \to \Omega_{n+1}$ are given by
\begin{align*}
\partial_i(\omega)(t_0,\ldots,t_{n-1}) & = \omega(t_0,\ldots,t_{i-1},0,t_i,\ldots,t_{n-1}), \\
s_i(\eta)(t_0,\ldots,t_{n+1}) & = \eta(t_0,\ldots,t_i+t_{i+1},\ldots,t_{n+1}),
\end{align*}
for $0\leq i \leq n$ (cf.~e.g.~\cite[\S10(c)]{FHT-RHT}). Here are two key properties of $\Omega_\bullet$.
\begin{lemma} \label{lemma:poincare lemma}
The unit map $\kk \to \Omega_n$ is a quasi-isomorphism for every $n$.
\end{lemma}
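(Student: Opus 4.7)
The plan is to recognise $\Omega_n$ as an algebraic model for the de Rham complex of affine $n$-space and verify the Poincar\'e lemma in this polynomial setting. Concretely, I would use the relation $t_0 = 1 - \sum_{i=1}^n t_i$, together with its differential $dt_0 = -\sum_{i=1}^n dt_i$, to identify $\Omega_n$ with the free graded-commutative dg algebra
$$A_n = \kk[t_1,\ldots,t_n] \otimes \Lambda(dt_1,\ldots,dt_n),$$
equipped with the unique derivation satisfying $d(t_i) = dt_i$. One checks routinely that the substitution $t_0 \mapsto 1 - \sum t_i$, $dt_0 \mapsto -\sum dt_i$ descends to an isomorphism of cdga's $\Omega_n \xrightarrow{\cong} A_n$, and that under it the unit $\kk \to \Omega_n$ becomes the inclusion of constants.

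Next, I would invoke the K\"unneth theorem to reduce to the one-variable case: as a cdga, $A_n \cong A_1^{\otimes n}$ where $A_1 = \kk[t] \otimes \Lambda(dt)$ with $d(t) = dt$. Since we are working over a field, K\"unneth gives $H^*(A_n) \cong H^*(A_1)^{\otimes n}$, so it suffices to show $H^*(A_1) = \kk$ concentrated in degree zero.

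This last step is immediate: the differential sends $t^k$ to $k t^{k-1} dt$, and in characteristic zero this map has kernel exactly $\kk$ and is surjective onto $\kk[t]\cdot dt$, since each $t^{k-1}dt$ equals $d(\tfrac{1}{k} t^k)$. Hence $H^0(A_1) = \kk$ and $H^1(A_1) = 0$, so $H^*(\Omega_n) = \kk$ with the unit inducing the isomorphism. There is no serious obstacle to this strategy; the only point requiring genuine care is the initial identification $\Omega_n \cong A_n$, which is a formal but slightly delicate manipulation with the defining dg ideal (one must use that it is generated as a \emph{dg} ideal, so that $dt_0 + \cdots + dt_n$ also lies in it).
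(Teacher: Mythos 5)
Your proof is correct and follows the same route as the paper: identify $\Omega_n$ with $\Omega_1^{\otimes n}$ (equivalently the free cdga on $t_1,\ldots,t_n$), invoke the K\"unneth theorem over a field, and verify the one-variable case directly using characteristic zero. You simply spell out the details that the paper leaves as "easily seen," including the correct observation that the \emph{dg} ideal contains $dt_0+\cdots+dt_n$.
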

\begin{proof}
The unit map of $\Omega_1 \cong \kk[t,dt]$ is easily seen to be a quasi-isomorphism if and only if $\kk$ has characteristic zero. Since $\Omega_n$ is isomorphic to $\Omega_1^{\tensor n}$, the claim for $n\ne 1$ follows from the K\"unneth theorem.
\end{proof}
\begin{lemma} \label{lemma:contractibility}
The simplicial vector space $\Omega_\bullet^k$ is contractible for every $k$.
\end{lemma}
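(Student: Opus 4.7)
The plan is to exploit the Dold--Kan correspondence, which reduces contractibility of the simplicial $\kk$-vector space $\Omega_\bullet^k$ to acyclicity of its normalized Moore complex
\[
N_n(\Omega^k)=\bigcap_{i=0}^{n-1}\ker\bigl(\partial_i\colon \Omega_n^k\to \Omega_{n-1}^k\bigr),\qquad d_{\mathrm{Moore}}=(-1)^n\partial_n.
\]
First I would make this concrete. Since $\partial_i$ is pullback along the $i$-th coface $d^i\colon \Delta^{n-1}\to\Delta^n$ — that is, the substitution $t_i=0$ with $dt_i=0$ — an element $\omega\in N_n(\Omega^k)$ is a polynomial $k$-form on $\Delta^n$ that restricts to zero on the $n$ codimension-one faces $\{t_i=0\}$ with $i<n$.

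The heart of the argument is to exhibit an explicit polynomial chain contraction $s\colon N_n(\Omega^k)\to N_{n+1}(\Omega^k)$ satisfying $\partial_{n+1}\,s + s\,\partial_n = \mathrm{id}$. My approach would combine two ingredients: the cone structure on $\Delta^{n+1}$ with apex the last vertex $e_{n+1}$, which embeds $\Delta^n$ as the opposite face $\{t_{n+1}=0\}$, and the explicit polynomial contracting homotopy of $\Omega_{n+1}$ from Lemma~\ref{lemma:poincare lemma} (the polynomial de~Rham contraction to $e_{n+1}$). The hypothesis $\omega\in N_n(\Omega^k)$ guarantees that extending $\omega$ trivially from $\{t_{n+1}=0\}$ to the rest of $\partial\Delta^{n+1}$ by zero is compatible on intersections, and then feeding this boundary datum through the apex contraction produces a polynomial form $s(\omega)$ on $\Delta^{n+1}$ whose face restrictions encode the homotopy identity.

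The main obstacle is a polynomial bookkeeping: the naive cone retraction $(t_0,\ldots,t_{n+1})\mapsto (t_0/(1-t_{n+1}),\ldots,t_n/(1-t_{n+1}))$ is not polynomial, so one must either multiply by a sufficiently high power of a $t_i$ so that denominators clear or, more elegantly, rewrite the construction directly in terms of the manifestly polynomial contracting homotopy supplied by Lemma~\ref{lemma:poincare lemma}. Once $s$ is in hand, checking that $s(N_n)\subseteq N_{n+1}$ is a direct substitution (the output vanishes on $\{t_i=0\}$ for $i<n$ because $\omega$ does, and on $\{t_n=0\}$ by the construction), and verifying $\partial_{n+1}s+s\,\partial_n = \mathrm{id}$ reduces to the identity $dK+Kd=\mathrm{id}-\epsilon_{e_{n+1}}$ from the polynomial Poincar\'e lemma, applied fibrewise over the cone.
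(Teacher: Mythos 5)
Your reduction to acyclicity of the normalized Moore complex via Dold--Kan is fine, and the target you identify (a simplicial contraction, essentially an extra degeneracy) would indeed prove the lemma. But the proof has a genuine gap: the contraction $s$ is never actually constructed, and the sketch of how to build it rests on two things that do not hold up. First, Lemma~\ref{lemma:poincare lemma} as stated (and as proved, via the K\"unneth theorem from the case $n=1$) does not supply an explicit polynomial contracting homotopy $K$ with $dK+Kd=\mathrm{id}-\epsilon$; you would have to construct that operator yourself, which is a nontrivial piece of the work you are deferring. Second, and more seriously, even granted such a $K$, the identity $dK+Kd=\mathrm{id}-\epsilon$ contracts the \emph{de Rham} differential $d$, whereas the identity you need, $\partial_{n+1}s+s\,\partial_n=\mathrm{id}$ on $N_\bullet(\Omega^k)$, lives in the \emph{simplicial} direction (face maps, i.e.\ restriction to codimension-one faces). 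These are different differentials on different complexes, and the claimed reduction of one identity to the other is not a reduction at all; what you actually need is an extension/filling statement for forms prescribed on part of $\partial\Delta^{n+1}$, which is the extendability property, not the Poincar\'e lemma. Finally, the acknowledged ``polynomial bookkeeping'' obstacle is real and neither proposed fix is viable as stated: multiplying by a power of some $t_i$ to clear denominators changes the form and destroys the homotopy identities you are trying to verify.

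For comparison, the paper sidesteps all of this with a single explicit polynomial formula: given a cycle $\omega\in\Omega_n^k$ with $\partial_i\omega=0$ for all $i$, the form
$$\nu=\sum_{j=1}^{n+1}t_j\,\omega(t_1,\ldots,t_{j-1},t_j+t_0,t_{j+1},\ldots,t_{n+1})$$
is manifestly polynomial and satisfies $\partial_0\nu=\omega$ (using $\sum_{j\geq 1}t_j=1$ when $t_0=0$) and $\partial_i\nu=0$ for $i>0$ (using $\partial_i\omega=0$), so every homotopy class is trivial; since a simplicial vector space is a Kan complex, this gives contractibility directly. If you want to salvage your approach, the honest route is to verify that this same assignment $\omega\mapsto\nu$ (or a variant) defines the extra degeneracy you are after on the whole of $\Omega_\bullet^k$, rather than trying to derive it from the Poincar\'e lemma.
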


\begin{proof}
Classes in $\pi_n(\Omega_\bullet^k)$ are represented by differential forms $\omega \in \Omega_n^k$ that satisfy $\partial_i(\omega) = 0$ for $0\leq i\leq n$. Given such a form $\omega$, the formula
$$
\nu = \sum_{j=1}^{n +1} t_j \omega(t_1,\ldots,t_{j-1},t_j+t_0,t_{j+1},\ldots,t_{n+1})$$
defines a form $\nu \in \Omega_{n + 1}^k$ such that $\partial_0(\nu) = \omega$ and $\partial_i(\nu) = 0$ for $0<i\leq n$, showing the homotopy class represented by $\omega$ is trivial.
\end{proof}

\begin{remark}
As discussed in \cite[\S7.3-\S7.4]{B-rathom}, contractibility of the simplicial cdga $\Omega_\bullet$ is equivalent to it being \emph{extendable} in the sense of \cite[p.118]{FHT-RHT}. Extendability is the basic property that the restriction map $\Omega^*(X) \to \Omega^*(A)$ is surjective for every inclusion $A\to X$ of simplicial sets. The proof of contractibility/extendability given here is taken from \cite[Proposition 7.13]{B-rathom}, which in turn is based on \cite[Lemma 3.2]{Getzler}.
\end{remark}

\section{Some simplicial homotopy theory} \label{sec:simplicial homotopy theory}
In this section we review some basic simplicial homotopy theory, organized in a way suitable for our applications. First let us recall the following definitions.
\begin{definition}
\begin{enumerate}
\item A surjective map of simplicial sets $p\colon E\to B$ is called a \emph{fibre bundle with fibre $F$} if for every simplex $\sigma\colon \Delta^n \to B$ the left vertical map in the pullback
$$
\xymatrix{\sigma^*(E) \ar[d] \ar[r] & E \ar[d]^-p \\ \Delta^n \ar[r]^-\sigma & B}
$$
is isomorphic to $F\times \Delta^n \to \Delta^n$ as a simplicial set over $\Delta^n$.

\item Let $G$ be a simplicial group. A \emph{principal $G$-fibration}, or \emph{principal $G$-bundle}, is a simplicial map $p\colon E\to B$ in which $E$ has a free $G$-action such that $p$ induces an isomorphism $E/G\cong B$. 
\end{enumerate}
\end{definition}

The following is standard (see e.g.~ \cite{May} or \cite{GJ}).

\begin{proposition}
\begin{enumerate}
\item Every simplicial group $G$ is a Kan complex.

\item Every principal $G$-bundle is a fibre bundle with fibre $G$.

\item Every fibre bundle with fibre a Kan complex is a Kan fibration. \hfill $\square$
\end{enumerate} 
\end{proposition}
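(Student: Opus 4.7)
The plan is to handle the three parts in order, each leaning on the previous where possible.

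For (1), I would invoke the classical argument of Moore. Given a horn $\Lambda^n_k \to G$ corresponding to elements $x_0,\ldots,\widehat{x}_k,\ldots,x_n \in G_{n-1}$ satisfying $\partial_i x_j = \partial_{j-1} x_i$ for $i<j$ with $i,j\ne k$, one constructs a filler $y\in G_n$ by successive modification, using degeneracies and the group structure to match one face at a time. Concretely, one defines a sequence $w_0, w_1, \ldots$ in $G_n$ starting from $w_0=1$, and at each step multiplies by a carefully chosen element of the form $s_{r-1}(g)^{\pm 1}$ with $g\in G_{n-1}$, designed so that the new $r$-th face becomes $x_r$ while the already-matched faces $x_i$ with $i<r$ (skipping $k$) are preserved. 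The simplicial identities $\partial_i s_j = s_{j-1}\partial_i$ etc.\ are exactly what make the preservation work. One has to order the induction differently depending on whether $k=0$, $0<k<n$, or $k=n$, but the idea is identical.

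For (2), fix a simplex $\sigma\colon \Delta^n \to B$; I need to trivialize $\sigma^*(E) \to \Delta^n$. For a principal $G$-bundle, trivializations correspond bijectively to sections, so it suffices to produce a section. Such a section is in turn determined by its value on the non-degenerate top simplex $\iota_n\in\Delta^n_n$, which must be any element of $\sigma^*(E)_n$ lying over $\iota_n$; such an element exists because $p$, and hence its pullback, is surjective. The resulting simplicial map is well-defined because $\Delta^n$ is freely generated as a simplicial set by $\iota_n$. Given the section $s$, the formula $(g,\tau)\mapsto g\cdot s(\tau)$ defines a simplicial $G$-equivariant map $G\times \Delta^n \to \sigma^*(E)$; freeness of the action makes it injective, while the identification $\sigma^*(E)/G \cong \Delta^n$ makes it surjective, giving the desired isomorphism over $\Delta^n$.

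For (3), suppose $p\colon E\to B$ is a fibre bundle with Kan fibre $F$, and consider a horn-filling problem given by $\alpha\colon\Lambda^n_k \to E$ and $\sigma\colon\Delta^n \to B$ with $p\alpha = \sigma|_{\Lambda^n_k}$. Pulling back along $\sigma$ turns the lifting problem into one against the trivial bundle $F\times \Delta^n \to \Delta^n$. A lift $\Delta^n \to F\times \Delta^n$ consists of the canonical inclusion $\Delta^n \xrightarrow{\mathrm{id}}\Delta^n$ in the second factor together with an extension of the first-factor component $\Lambda^n_k \to F$ to $\Delta^n$, which exists because $F$ is a Kan complex. Transporting back along the trivialization produces the required filler in $E$.

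The main obstacle is the combinatorial bookkeeping in (1); the construction is completely explicit but the inductive step requires care in ordering the indices and handling the boundary cases $k=0$ and $k=n$ separately. Once (1) is in place, parts (2) and (3) are essentially formal consequences of the definitions.
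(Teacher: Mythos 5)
Your proposal is correct: all three parts are the standard arguments (Moore's inductive filler construction for (1), trivialization via a section obtained from Yoneda and surjectivity for (2), and pullback to the trivial bundle plus Kan-ness of $F$ for (3)). The paper does not prove this proposition but simply cites May and Goerss--Jardine as standard references, and the proofs you sketch are precisely the ones found there, so there is nothing to compare beyond the minor bookkeeping of degeneracy indices in (1), which you rightly flag as case-dependent on the position of $k$.
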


Let $G$ be a simplicial group acting on a simplicial set $X$. For a vertex $v\in X_0$, the orbit $Gv$ is the subspace of $X$ which in level $n$ is the orbit of the degenerate $n$-simplex $s_0^n(v)$ under the action of $G_n$. Similarly, the stabilizer $G_v$ is the simplicial subgroup of $G$ which in level $n$ is the stabilizer of $s_0^n(v)$.

\begin{proposition} \label{prop:action}
Let $G$ be a simplicial group acting on a simplicial set $X$.
\begin{enumerate}
\item For every vertex $v\in X_0$, the orbit $Gv$ is a Kan complex and the map
$$G\to Gv$$
is a principal $G_v$-fibration. If $G$ is contractible, then this is a universal principal $G_v$-fibration, whence $Gv\simeq BG_v$.

\item The canonical map
$$\bigsqcup_{[v]\in X_0/G_0} Gv \to X$$
is an isomorphism if and only if the orbit space $X/G$ is discrete. If this holds, then $X$ is a Kan complex. If in addition $G$ is connected, then $Gv$ coincides with the path component $X_v$ containing $v\in X_0$ and in particular
$$\pi_0(X) = X_0/G_0.$$

\item The canonical map
$$G\times_{G_0} X_0 \to X$$
is an isomorphism if and only if the orbit space $X/G$ is discrete and the stabilizer $G_v$ is discrete for every $v\in X_0$. If this holds and moreover $G$ is contractible, then $X$ is weakly equivalent to the nerve of the groupoid associated to the action of $G_0$ on $X_0$.
\end{enumerate}
\end{proposition}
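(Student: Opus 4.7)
The plan is to handle the three parts in order, since (2) relies on (1) for the Kan conclusion and (3) combines (1) and (2). My first move throughout is to promote the set-theoretic descriptions of orbits and stabilizers to the correct simplicial descriptions, after which the preceding proposition does most of the work.

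For part (1), I would begin by recognizing $G\to Gv$ as a principal $G_v$-bundle: the right action of $G_v$ on $G$ is free, and the rule $g\cdot(G_v)_n\mapsto g\cdot s_0^n v$ gives a level-wise bijection $G/G_v \to Gv$. With this identification the preceding proposition shows that $G\to Gv$ is a fibre bundle with Kan fibre $G_v$, and therefore a Kan fibration. To deduce that $Gv$ itself is Kan, I would lift any horn $\Lambda^n_k\to Gv$ to $G$ by a straightforward induction on simplices (using that every principal $G_v$-bundle over an individual simplex is trivial), fill the lifted horn in the Kan complex $G$, and then project the filler down. When $G$ is contractible, $G\to Gv$ is a principal $G_v$-bundle with contractible total space, which by the standard characterization models the universal $G_v$-bundle and gives $Gv\simeq BG_v$.

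For part (2), I would check surjectivity and injectivity of the canonical map $\bigsqcup_{[v]}Gv \to X$ separately. In level $n$, surjectivity says every simplex of $X$ is of the form $g\cdot s_0^n v$, which is precisely the discreteness of $X/G$; injectivity follows from applying $\partial_0^n$, since the $\partial_0^n$-face of $g\cdot s_0^n v$ lies in $G_0\cdot v$, distinguishing different vertex orbits. Once the decomposition holds, each $Gv$ is Kan by (1), and a disjoint union of Kan complexes is Kan. For the $\pi_0$ assertion, if $G$ is connected then any two vertices of $G_0$ are joined by a zigzag of $1$-simplices in $G$, which the action transports to a zigzag in $Gv$ between any two points of $G_0\cdot v$; combined with the Kan property of $Gv$, this gives path-connectedness of each orbit.

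For part (3), the plan is to compute both sides in level $n$ and compare. Using $X_0=\bigsqcup_{[v]} G_0/(G_v)_0$ one obtains $(G\times_{G_0}X_0)_n = \bigsqcup_{[v]} G_n/(G_v)_0$, while under the first hypothesis part (2) together with the principal bundle description from (1) gives $X_n = \bigsqcup_{[v]} G_n/(G_v)_n$. The natural comparison is induced by the inclusion $(G_v)_0\hookrightarrow(G_v)_n$, so it is an isomorphism precisely when each stabilizer $G_v$ is discrete. When $G$ is additionally contractible, part (1) identifies $Gv\simeq BG_v$, and the resulting decomposition $X\simeq \bigsqcup_{[v]} BG_v$ is, via the equivalence of each component of the action groupoid with its one-object full subgroupoid on $v$, weakly equivalent to the nerve of the full action groupoid of $G_0$ on $X_0$. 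The main obstacle I anticipate is the Kan complex step in part (1): once triviality of principal bundles over individual simplices (and hence over horns by induction) is in hand, parts (2) and (3) are largely formal manipulations with orbit decompositions.
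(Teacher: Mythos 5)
Your argument is correct and its overall architecture matches the paper's: orbit decomposition for (2), reduction to a single orbit and comparison of stabilizers for (3). The two places where you genuinely diverge are worth comparing. First, for the Kan property of $Gv$ in part (1), you unpack a direct horn-lifting argument (lift $\Lambda^n_k \to Gv$ to $G$ using triviality of the principal $G_v$-bundle over simplices, fill in $G$, project down), whereas the paper simply applies the cancellation Lemma~\ref{lemma:cancellation} to $G \to Gv \to *$, which packages exactly this lifting. Your route is self-contained but the ``induction on simplices'' is the thinnest step: extending a section of a trivial $G_v$-bundle over a subcomplex of a simplex amounts to extending a map into the Kan complex $G_v$, which is only automatic along anodyne inclusions, so you must attach the simplices of the horn in an order realizing $\Delta^0 \to \Lambda^n_k$ as a composite of pushouts of horn inclusions (or just cite the cancellation lemma). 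Second, for the final claim of (3), you identify $X \simeq \bigsqcup_{[v]} BG_v$ componentwise and compare with the nerve of the action groupoid via the equivalence of each of its components with the one-object subgroupoid at $v$; the paper instead recognizes $G \times_{G_0} X_0$ with $G$ contractible as a model for the homotopy orbit space $X_0 \horb G_0$ and compares it with the bar construction $B(*,G_0,X_0)$, which \emph{is} the nerve of the action groupoid. Your version is more hands-on; the paper's is shorter and yields a natural zigzag rather than a component-by-component equivalence. Two small points to tighten: in (3) the ``only if'' direction also requires observing that discreteness of $X/G$ is itself necessary (e.g.\ because $(G\times_{G_0}X_0)/G \cong X_0/G_0$ is discrete), since your stabilizer comparison is carried out only under that hypothesis; and in (2) you should note that the disjoint-union decomposition forces each $Gv$ to be a union of path components, so that connectedness of $Gv$ really does give $Gv = X_v$ and not merely $Gv \subseteq X_v$.
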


\begin{proof}
That $G\to Gv$ is a principal $G_v$-fibration is immediate as $G_v$ acts freely on $G$ and $G/G_v\cong Gv$. That $Gv$ is a Kan complex follows from Lemma \ref{lemma:cancellation} below applied to $G\to Gv \to *$.

That $X/G$ is discrete means that $s_0^n\colon X_0/G_0 \to X_n/G_n$ is an isomorphism for every $n$. In other words, for each $x\in X_n$ there is a unique $[v]\in X_0/G_0$ such that $[x] = s_0^n[v]$, i.e., $x\in G_ns_0^n(v)$. This proves the first part of the second statement. The statement about $X$ being Kan follows since disjoint unions of Kan complexes are Kan. If $G$ is connected, then so is $Gv$, and this implies the second part.

For the third statement, note that $X/G$ being discrete is an obvious necessary condition. Assuming $X/G$ discrete, the decompositions of $X_0$ and $X$ as coproducts over all vertices $v$ of the orbits $G_0 v$ and $Gv$, respectively, reduce the statement to $G\times_{G_0} G_0v \to Gv$ being an isomorphism. Since $G_0v \cong G_0/(G_0)_v$ and $Gv \cong G/G_v$ this is equivalent to the statement that $(G_0)_v \to G_v$ is an isomorphism, which is equivalent to $G_v$ being discrete. If $G$ is contractible, then $G\times_{G_0} X_0$ is a model for the homotopy orbit space $X_0\horb G_0$. Another model for the homotopy orbit space is the bar construction $B\big(*,G_0,X_0\big)$, which is isomorphic to the nerve of the groupoid associated to the action of $G_0$ on $X_0$.
\end{proof}

\begin{proposition} \label{prop:kan fibration}
Let $G$ and $G'$ be simplicial groups acting on simplicial sets $X$ and $X'$, respectively, and assume that the orbit spaces $X/G$ and $X'/G'$ are discrete. Let $\varphi\colon G \to G'$ be a map of simplicial groups and $f\colon X\to X'$ a map of $G$-spaces. If $\varphi$ is a Kan fibration, then so is $f$.
\end{proposition}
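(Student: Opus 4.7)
The plan is to reduce the Kan fibration property of $f$ to that of $\varphi$ orbit by orbit, and then invoke the cancellation principle already used (as Lemma~\ref{lemma:cancellation}) in the proof of Proposition~\ref{prop:action}.

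First, since $X/G$ and $X'/G'$ are discrete, Proposition~\ref{prop:action}(2) yields simplicial decompositions $X = \bigsqcup_{[v]} Gv$ and $X' = \bigsqcup_{[v']} G'v'$. Equivariance of $f$ with respect to $\varphi$ sends the orbit $Gv$ into the orbit $G'f(v)$. Since each horn $\Lambda^n_k$ is connected, every lifting problem for $f$ against a horn inclusion is supported on a single pair $Gv \to G'f(v)$, so it suffices to prove that each restriction $f_v \colon Gv \to G'f(v)$ is a Kan fibration.

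Next, exploit the principal bundle structure on the orbits. By Proposition~\ref{prop:action}(1), the orbit maps $p \colon G \to Gv$ and $p' \colon G' \to G'f(v)$ are principal bundles, hence fibre bundles with Kan fibre; in particular they are surjective Kan fibrations. Since $\varphi$ is a Kan fibration by hypothesis, the composite $p' \circ \varphi \colon G \to G'f(v)$ is a Kan fibration, being a composition of two Kan fibrations. By $\varphi$-equivariance, this composite equals $f_v \circ p$.

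Finally, applying Lemma~\ref{lemma:cancellation} to the factorization $G \xrightarrow{p} Gv \xrightarrow{f_v} G'f(v)$ — in which $p$ is a surjective Kan fibration and $f_v \circ p$ is a Kan fibration — yields that $f_v$ is a Kan fibration, as required. The only substantive input is Lemma~\ref{lemma:cancellation}; with it in hand, the rest of the argument is a formal consequence of the orbit decomposition and closure of Kan fibrations under composition.
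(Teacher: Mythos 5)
Your proof is correct and follows essentially the same route as the paper: reduce to the orbit maps $Gv \to G'f(v)$ via the coproduct decomposition of Proposition \ref{prop:action}(2), observe that the vertical maps $G \to Gv$ and $G' \to G'f(v)$ are surjective Kan fibrations by Proposition \ref{prop:action}(1), and apply Lemma \ref{lemma:cancellation} to the factorization $f_v \circ p = p' \circ \varphi$. The paper's proof is just a terser version of the same argument.
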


\begin{proof}
It follows from the decompositions of $X$ and $X'$ as coproducts of orbits (Proposition \ref{prop:action}(2)) that it is enough to verify that $Gv \to G'f(v)$ is a Kan fibration for every $v\in X_0$. By Proposition \ref{prop:action}(1), the vertical maps in the diagram
$$\xymatrix{G \ar[r]^-\varphi \ar[d] & G' \ar[d] \\ Gv \ar[r] & G'f(v).}$$
are surjective Kan fibrations. The lemma below then implies that the bottom horizontal map is a Kan fibration if $\varphi$ is.
\end{proof}

\begin{lemma} \label{lemma:cancellation}
If $X\xrightarrow{p} Y \xrightarrow{q} Z$ are simplicial maps such that $p$ and $qp$ are Kan fibrations and $p$ is surjective, then $q$ is a Kan fibration.
\end{lemma}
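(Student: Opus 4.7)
My plan is to verify that $q$ has the right lifting property against each horn inclusion $\iota\colon \Lambda^n_k \hookrightarrow \Delta^n$. Given the lifting problem specified by
$$a\colon \Lambda^n_k \to Y, \quad b\colon \Delta^n \to Z, \quad qa = b\iota,$$
I want to produce $c\colon \Delta^n \to Y$ with $c\iota = a$ and $qc = b$. The strategy is first to lift $a$ through $p$ to a map $\tilde a\colon \Lambda^n_k \to X$, then to use the fact that $qp$ is a Kan fibration to fill $\tilde a$ to a simplex $\tilde c\colon \Delta^n \to X$ with $(qp)\tilde c = b$, and finally to set $c = p\tilde c$.

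The main obstacle lies in the first step. The plan there rests on the observation that for any vertex $v$ of $\Lambda^n_k$, the inclusion $\{v\}\hookrightarrow \Lambda^n_k$ can be written as a finite composition of pushouts of horn inclusions $\Lambda^m_j \hookrightarrow \Delta^m$: one builds $\Lambda^n_k$ from $\{v\}$ by reversing an elementary collapse of $\Lambda^n_k$ onto $\{v\}$, attaching one pair of simplices $(\sigma, \tau)$ at a time with $\sigma$ a codimension-one free face of $\tau$, and each such attachment is precisely a pushout along a horn inclusion. With this combinatorial fact in hand, I pick a vertex $v$ of $\Lambda^n_k$, use the surjectivity of $p$ in degree zero to lift $a(v)\in Y_0$ to some $\tilde v\in X_0$, and then inductively extend to $\tilde a\colon \Lambda^n_k \to X$ with $p\tilde a = a$ using the horn-filling property of $p$ at each attachment.

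The remaining steps are routine. With $\tilde a$ in hand, the identity $(qp)\tilde a = qa = b\iota$ produces a lifting problem for $qp$ against $\iota$, and a horn fill yields $\tilde c\colon \Delta^n \to X$ with $\tilde c\iota = \tilde a$ and $(qp)\tilde c = b$. Then $c = p\tilde c\colon \Delta^n \to Y$ satisfies $c\iota = p\tilde a = a$ and $qc = (qp)\tilde c = b$, as required.
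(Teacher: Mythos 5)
Your argument is correct. The paper itself gives no proof here --- it simply cites \cite[Exercise V.3.8]{GJ} --- so your write-up supplies exactly the details the paper omits, and it is the standard intended solution to that exercise. The two nontrivial ingredients are both in order: (i) the inclusion of a vertex into $\Lambda^n_k$ is anodyne, so that a horn in $Y$ can be lifted through the surjective Kan fibration $p$ after lifting a single vertex by surjectivity in degree $0$; and (ii) the rest is formal, using horn-filling for $qp$ and pushing the filler forward along $p$. One small remark on (i): the cleanest choice is $v=k$, since $\Lambda^n_k$ is a cone with apex $k$ (every maximal face contains $k$), and the resulting matching $S\leftrightarrow S\cup\{k\}$ gives the elementary collapse you describe, with each expansion step visibly a pushout of a horn inclusion. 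For a vertex $v\neq k$ the naive matching on $v$ breaks down (e.g.\ $S=[n]\setminus\{k,v\}$ would be paired with the missing face $[n]\setminus\{k\}$), so your blanket claim ``for any vertex'' would need either a more careful collapsing order or an appeal to the identification of anodyne maps with trivial cofibrations; but since your proof only needs one vertex, fixing $v=k$ removes the issue entirely.
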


\begin{proof}
This is \cite[Exercise V.3.8]{GJ}.
\end{proof}

\section{Exponential groups and Maurer--Cartan spaces}
If $L$ is a complete dg Lie algebra, then we can form the simplicial complete dg Lie algebra $\Omega_\bullet(L) = \Omega_\bullet \ctensor L$. The \emph{exponential group} of $L$ is the simplicial group
$$\exp_\bullet(L) = \exp\big(Z^0\Omega_\bullet(L)\big)$$
obtained by applying $\exp(-)$ levelwise to the simplicial complete Lie algebra $Z^0\Omega_\bullet(L)$. The \emph{nerve} or \emph{Maurer--Cartan space} is the simplicial set
$$\MC_\bullet(L) = \MC\big(\Omega_\bullet(L)\big).$$
The levelwise gauge action defines an action of the simplicial group
$$\gauge_\bullet(L) = \exp\big(\Omega_\bullet^0(L)\big)$$
on $\MC_\bullet(L)$. We will now apply the results of the previous section to this action in order to deduce a number of basic properties of the Maurer--Cartan space.

\begin{lemma} \label{lemma:main lemma}
Let $L$ be a complete dg Lie algebra.
\begin{enumerate}
\item The simplicial group $\gauge_\bullet(L)$ is contractible.
\item The orbit space $\MC_\bullet(L)/\gauge_\bullet(L)$ is discrete and isomorphic to $\MCQ(L)$.

\item The stabilizer of $\tau\in\MC(L)$ under the action of $\gauge_\bullet(L)$ on $\MC_\bullet(L)$ is the simplicial group $\exp_\bullet(L_\tau)$. It is discrete if and only if $L=L^{\geq 0}$.
\end{enumerate}
\end{lemma}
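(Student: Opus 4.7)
For (1), the underlying simplicial set of $\gauge_\bullet(L) = \exp(\Omega^0_\bullet(L))$ is the simplicial $\kk$-vector space $\Omega^0_\bullet(L) = \varprojlim_n \Omega^0_\bullet(L/F^n L)$. At each finite stage, $\Omega^0_m(L/F^n L) = \bigoplus_{k=0}^{m} \Omega^k_m \otimes (L/F^n L)^{-k}$ is a finite direct sum of contractible simplicial vector spaces by Lemma~\ref{lemma:contractibility}, hence contractible. Surjectivity of the transition maps in the tower (Mittag--Leffler) then passes contractibility to the inverse limit.

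For (2), I will apply Lemma~\ref{lemma:GM light} to the surjection $\epsilon\colon \Omega_n(L) \to L$ given by evaluating forms at a vertex of $\Delta^n$. Its kernel $\bar\Omega_n \ctensor L$ has associated graded pieces $\bar\Omega_n \otimes (F^p L / F^{p+1} L)$, which are acyclic because $\bar\Omega_n = \ker(\Omega_n \to \kk)$ is acyclic by Lemma~\ref{lemma:poincare lemma}. Hence the induced map $\MCQ(\Omega_n(L)) \to \MCQ(L)$ is injective, and the constant inclusion $L \hookrightarrow \Omega_n(L)$ supplies a right inverse and thus promotes it to a bijection. These natural bijections identify $\MC_\bullet(L)/\gauge_\bullet(L)$ with the constant simplicial set $\MCQ(L)$.

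For (3), Lemma~\ref{lemma:stabilizer} applied levelwise, together with $\Omega_n(L)_\tau = \Omega_n(L_\tau)$, identifies the stabilizer of $\tau$ as the simplicial group $\exp_\bullet(L_\tau)$. For the discreteness equivalence: if $L = L^{\geq 0}$ then $\Omega_n(L)^0 = \Omega^0_n \otimes L^0$, and a short bidegree analysis forces any $d_\tau$-cocycle to have a constant $\Omega^0_n$-component, so it is of the form $1 \otimes a$ with $a \in Z^0(L_\tau)$, proving the simplicial set is constant. Conversely, if $L^{-k} \neq 0$ for some $k \geq 1$, pick $0 \neq a \in L^{-k}$ and a top form $\omega \in \Omega^k_k$; by Lemma~\ref{lemma:poincare lemma} there exists $\omega' \in \Omega^{k-1}_k$ with $d\omega' = (-1)^{k+1}\omega$, and a short verification using $d_\tau^2 = 0$ shows that $\omega \otimes a + \omega' \otimes d_\tau a$ is a $d_\tau$-cocycle in $Z^0(\Omega_k(L_\tau))$ of total degree zero with a nonzero bidegree $(k,-k)$-component, hence not of the form $1 \otimes b$, so $\exp_\bullet(L_\tau)$ is not a constant simplicial set.

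The main obstacle is (2): one must recognize that the vertex augmentation has a kernel satisfying the acyclicity hypothesis of Lemma~\ref{lemma:GM light}, and that the resulting bijections are precisely the simplicial degeneracy maps on the orbit sets. Given this, the remaining calculations are mechanical given the tools already assembled.
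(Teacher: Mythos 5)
Your proof is correct and follows essentially the same route as the paper: contractibility of $\Omega^0_\bullet(L)$ via Lemma \ref{lemma:contractibility}, part (2) by applying Lemma \ref{lemma:GM light} to the vertex evaluation $\Omega_n(L)\to L$ whose kernel has acyclic associated graded $\overline{\Omega}_n\otimes(F^pL/F^{p+1}L)$, and part (3) by applying Lemma \ref{lemma:stabilizer} levelwise together with the identification $\Omega_n(L)_\tau=\Omega_n(L_\tau)$. The only cosmetic differences are that you spell out the inverse-limit step in (1) and give the discreteness criterion in (3) directly, where the paper delegates it to Proposition \ref{prop:abelian}(4); your explicit non-degenerate cocycle $\omega\otimes a+\omega'\otimes d_\tau a$ checks out.
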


\begin{proof}
The underlying simplicial set of $\gauge_\bullet(L)$ is $\Omega_\bullet^0(L)$, the contractibility of which follows from Lemma \ref{lemma:contractibility}. Towards the second claim, note that the orbit space is the simplicial set $\MCQ(\Omega_\bullet(L))$ obtained by applying $\MCQ(-)$ levelwise. The map $\eta_\bullet\colon L\to \Omega_\bullet(L)$ induced by the unit of $\Omega_\bullet$ induces a simplicial map from the constant simplicial set $\MCQ(L)$ to $\MCQ(\Omega_\bullet(L))$. The map $\epsilon_n = \partial_0^n\tensor 1 \colon \Omega_n(L)\to L$ satisfies $\epsilon_n \eta_n = 1$. In particular, $\MCQ(\epsilon_n)$ is surjective. Lemma \ref{lemma:GM light} applies to $\epsilon_n$, because if $I=\ker(\epsilon_n)$, then $F^kI/F^{k+1}I$ may be identified with $\overline{\Omega}_n\tensor (F^kL/F^{k+1}L)$, and $\overline{\Omega}_n = \ker(\partial_0^n)$ has zero cohomology by Lemma \ref{lemma:poincare lemma}, so $\MCQ(\epsilon_n)$ is a bijection. It follows that $\MCQ(\eta_\bullet)\colon \MCQ(L) \to \MCQ(\Omega_\bullet(L))$ is a levelwise bijection. The third claim follows from Lemma \ref{lemma:stabilizer} and Proposition \ref{prop:abelian}(4) below.
\end{proof}

\begin{theorem} \label{thm:mc}
Let $L$ be a complete dg Lie algebra.
\begin{enumerate}

\item The canonical map
$$\bigsqcup_{[\tau]\in \MCQ(L)} \gauge_\bullet(L)\tau \to \MC_\bullet(L)$$
is an isomorphism. Hence, $\MC_\bullet(L)$ is a Kan complex and the orbit $\gauge_\bullet(L)\tau$ agrees with the path component $\MC_\bullet(L)_\tau$ containing $\tau$. In particular, there is a natural bijection
$$\pi_0\MC_\bullet(L) \cong \MCQ(L),$$
i.e., two Maurer--Cartan elements are gauge equivalent if and only if they belong to the same path component of the Maurer--Cartan space.

\item For every $\tau\in \MC(L)$, the map given by acting on $\tau$,
$$\gauge_\bullet(L) \to \MC_\bullet(L)_\tau,$$
is a universal principal $\exp_\bullet(L_\tau)$-bundle. In particular, $\MC_\bullet(L)_\tau$ is weakly equivalent to the classifying space $B\exp_\bullet(L_\tau)$. Equivalently, $\exp_\bullet(L_\tau)$ is a simplicial group model for the loop space $\Omega \MC_\bullet(L)_\tau$ based at $\tau$.

\item The canonical map
$$\gauge_\bullet(L) \times_{\gauge(L)} \MC(L) \to \MC_\bullet(L)$$
is an isomorphism if and only if $L = L^{\geq 0}$. If this holds, then $\MC_\bullet(L)$ is weakly equivalent to the nerve of the Deligne groupoid, i.e., the groupoid associated to the action of $\gauge(L)$ on $\MC(L)$.
\end{enumerate}
\end{theorem}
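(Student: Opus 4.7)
The plan is to deduce all three parts directly from Proposition \ref{prop:action}, applied to the gauge action of $\gauge_\bullet(L)$ on $\MC_\bullet(L)$. The three hypotheses required of that proposition---contractibility of the acting group, discreteness of the orbit space with the stated identification of the quotient, and identification of stabilizers---are exactly the three assertions of Lemma \ref{lemma:main lemma}. I expect no substantive obstacle; the argument is mostly bookkeeping, the one subtle point being to check that the ``iff'' in part (3) matches the ``iff'' supplied by Proposition \ref{prop:action}(3).

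For (1), I apply Proposition \ref{prop:action}(2). Lemma \ref{lemma:main lemma}(2) gives discreteness of the orbit space, which yields the coproduct decomposition indexed by $[\tau]\in \MCQ(L)$ and shows that $\MC_\bullet(L)$ is Kan. Contractibility of $\gauge_\bullet(L)$ from Lemma \ref{lemma:main lemma}(1) implies in particular that it is connected, so each orbit $\gauge_\bullet(L)\tau$ coincides with the path component $\MC_\bullet(L)_\tau$. The identification $\pi_0\MC_\bullet(L) = \MC(L)/\gauge(L) = \MCQ(L)$ follows from the same proposition, using $\Omega_0 = \kk$ to identify $\MC_\bullet(L)_0 = \MC(L)$ and $\gauge_\bullet(L)_0 = \gauge(L)$.

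For (2), I apply Proposition \ref{prop:action}(1) at $v = \tau$. Lemma \ref{lemma:main lemma}(3) identifies the stabilizer as $\exp_\bullet(L_\tau)$, so $\gauge_\bullet(L) \to \gauge_\bullet(L)\tau$ is a principal $\exp_\bullet(L_\tau)$-bundle, and this is universal because $\gauge_\bullet(L)$ is contractible. Combining with the identification $\gauge_\bullet(L)\tau = \MC_\bullet(L)_\tau$ from part (1) yields the universal bundle statement, the equivalence $\MC_\bullet(L)_\tau \simeq B\exp_\bullet(L_\tau)$, and the loop space reformulation via $G\simeq \Omega BG$ for simplicial groups.

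For (3), I apply Proposition \ref{prop:action}(3). The orbit space is always discrete by Lemma \ref{lemma:main lemma}(2), so the canonical map is an isomorphism precisely when the stabilizer at every vertex is discrete. Lemma \ref{lemma:main lemma}(3) tells us that $\exp_\bullet(L_\tau)$ is discrete iff $L = L^{\geq 0}$, a condition independent of $\tau$; since $0\in \MC(L)$, this yields the stated iff. When it holds, the final clause of Proposition \ref{prop:action}(3), combined with contractibility of $\gauge_\bullet(L)$, identifies $\MC_\bullet(L)$ up to weak equivalence with the nerve of the action groupoid of $\gauge(L)$ on $\MC(L)$, which is by definition the Deligne groupoid.
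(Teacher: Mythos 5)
Your proposal is correct and is essentially identical to the paper's argument: the paper proves Theorem \ref{thm:mc} by declaring it immediate from Lemma \ref{lemma:main lemma} and Proposition \ref{prop:action}, which is exactly the deduction you carry out (your expansion, including the observation that $0\in\MC(L)$ guarantees a vertex to test in part (3), is just the bookkeeping the paper leaves implicit). No differences in approach and no gaps.
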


\begin{proof}
This is immediate from Lemma \ref{lemma:main lemma} and Proposition \ref{prop:action}.
\end{proof}

\begin{remark}
The fact that $\MC_\bullet(L)$ is a delooping of $\exp_\bullet(L)$ is shown for positively graded $L$ of finite type in \cite[Corollary 3.10]{B-fb}, but the proof relies on results of \cite[Chapter 25]{FHT-RHT} that are not applicable if $L$ is unbounded or not of finite type. A version of the statement is embedded in \cite[\S4.7]{Pridham}, but a direct elementary proof has been missing from the literature as far as we know.
\end{remark}

\begin{theorem} \label{thm:kan fibration}
If $f\colon L\to L'$ is a surjective morphism of filtered complete dg Lie algebras, then the induced map $\MC_\bullet(L) \to \MC_\bullet(L')$ is a Kan fibration.
\end{theorem}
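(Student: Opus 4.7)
The plan is to apply Proposition \ref{prop:kan fibration} to the induced map $f_*\colon \MC_\bullet(L)\to \MC_\bullet(L')$, viewed as equivariant with respect to the homomorphism of simplicial groups $\gauge_\bullet(f)\colon \gauge_\bullet(L)\to \gauge_\bullet(L')$. Both orbit spaces are discrete by Lemma \ref{lemma:main lemma}(2), so the theorem reduces at once to showing that $\gauge_\bullet(f)$ is itself a Kan fibration.

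For this I would invoke the standard observation that any levelwise surjective homomorphism of simplicial groups is automatically a Kan fibration: if $K$ is its kernel, translation makes the map a principal $K$-bundle, and stringing together the three parts of the unnamed proposition at the start of Section \ref{sec:simplicial homotopy theory} (simplicial groups are Kan, principal bundles are fibre bundles, fibre bundles with Kan fibre are Kan fibrations) yields the claim. So the task reduces to checking that $\gauge_\bullet(f)$ is levelwise surjective.

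Since $\exp$ is the identity on underlying sets, this amounts to showing that $\Omega_n \ctensor L \to \Omega_n \ctensor L'$ is surjective in degree zero. Even without assuming $f$ is strict for the filtrations, the surjection $L\to L'$ descends to a surjection $L/F^pL \to L'/F^pL'$ on each filtration quotient, and the kernels form an inverse system whose structure maps remain surjective. Tensoring with the flat $\kk$-module $\Omega_n$ preserves these surjections, and Mittag-Leffler then ensures that surjectivity persists after taking the completed inverse limit. This Mittag-Leffler verification is the one place where any real work has to be done; everything else is packaged into Proposition \ref{prop:kan fibration} and Lemma \ref{lemma:main lemma}.
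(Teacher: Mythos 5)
Your proposal is essentially the paper's proof: both apply Proposition \ref{prop:kan fibration} to the pair $\gauge_\bullet(f)$, $\MC_\bullet(f)$, use Lemma \ref{lemma:main lemma}(2) for discreteness of the orbit spaces, and thereby reduce everything to showing $\gauge_\bullet(f)$ is a Kan fibration. The only difference is in that last step, and it is cosmetic: where you realize the surjective homomorphism $\gauge_\bullet(f)$ as a principal bundle over its target with fibre the kernel (a Kan complex), the paper forgets the group structure and quotes the fact that a surjection of simplicial abelian groups is a Kan fibration; both routes are standard and correct. One caution about your Mittag--Leffler step: the transition maps of the kernel system $\ker\bigl(L/F^pL \to L'/F^pL'\bigr)$ are surjective only when $F^pL' \subseteq f(F^pL) + F^{p+1}L'$, which holds if $f$ is strict but does not follow from surjectivity and filtration-compatibility alone --- though the paper itself asserts the surjectivity of $\Omega_\bullet^0(L)\to\Omega_\bullet^0(L')$ without comment, so on this point you are being more careful than the source rather than less.
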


\begin{proof}
Apply Proposition \ref{prop:kan fibration} to the induced maps $\gauge_\bullet(f)\colon \gauge_\bullet(L)\to \gauge_\bullet(L')$ and $\MC_\bullet(f)\colon \MC_\bullet(L) \to \MC_\bullet(L')$. Forgetting group structures, the map $\gauge_\bullet(f)$ is equal to $\Omega_\bullet^0(L)\to \Omega_\bullet^0(L')$, which also underlies a surjective map of simplicial abelian groups and is hence a Kan fibration (see e.g.~\cite[Lemma III.2.10]{GJ}).
\end{proof}

\begin{remark}
Let us briefly comment on the relation to existing proofs that $\MC_\bullet(-)$ takes surjections to Kan fibrations, especially \cite[\S2.2]{Hinich}, \cite[\S8]{Hinich-coalg}, \cite[\S4]{Getzler}, and \cite[\S11.3]{BFMT}. The paper \cite{Hinich} is mainly concerned with the case $L=L^{\geq 0}$ and it should be noted that the decomposition \cite[2.2.2 Proposition]{Hinich} is only valid in this case by Theorem \ref{thm:mc}(3). The proofs in \cite[\S4]{Getzler} and \cite[\S11.3]{BFMT} use certain explicit homotopies on $\Omega_\bullet$ to decompose $\MC_n(L)$ as $\MC(L)$ times an auxiliary set (cf.~\cite[Lemma 4.6]{Getzler} and \cite[Lemma 11.11]{BFMT}). These decompositions should be compared to the decomposition in Theorem \ref{thm:mc}(1), which is less refined but sufficient for proving the Kan property. Unlike Getzler's proof \cite{Getzler}, the proof given here does not apply to $L_\infty$-algebras $L$, because the groups $\gauge_\bullet(L)$ and $\exp_\bullet(L)$ are not available. The suggestion \cite[Note 8.2.7]{Hinich-coalg} is close in spirit to the proof given here.
\end{remark}

\section{Homotopy groups}
In this section, we compute the homotopy groups of the exponential group and the Maurer--Cartan space. Since the homotopy groups of $\exp_\bullet(L)$ are the same as those of the underlying simplicial set $Z^0\Omega_\bullet(L)$, the computation for $\exp_\bullet(L)$ reduces to the case of abelian $L$. The homotopy groups of $\MC_\bullet(L)$ can then be computed using the fact that $\Omega\MC_\bullet(L)_\tau$ is weakly equivalent to $\exp_\bullet(L_\tau)$ by Theorem \ref{thm:mc}(2).

Consider the differential form $\vol{n} \in \Omega_n^n$ defined by
\begin{equation*}
\vol{n}  = n! dt_1\ldots dt_n.
\end{equation*}
It satisfies $d\vol{n} = \partial\vol{n} =0$ and $\int_{\Delta^n} \vol{n} = 1$. Here and below, $\partial$ denotes the alternating sum $\sum_i (-1)^i \partial_i$ of the face maps.

\begin{proposition} \label{prop:abelian}
Let $V$ be a complete filtered cochain complex, consider the simplicial cochain complex $\Omega_\bullet(V) = \Omega_\bullet \ctensor V$, and let $k$ be an integer.
\begin{enumerate}
\item The simplicial vector space $\Omega_\bullet^k(V)$ is contractible.
\item The simplicial vector space $H^k\big(\Omega_\bullet(V)\big)$ is discrete and isomorphic to $H^k(V)$.
\item The map
\begin{gather*}
f\colon H_k(V) \to \pi_k\big(Z^0\Omega_\bullet(V)\big), \\
[x] \mapsto [\vol{k}\tensor x],
\end{gather*}
is a natural isomorphism.
\item The simplicial vector space $Z^0\Omega_\bullet(V)$ is discrete if and only if $V=V^{\geq 0}$.
\end{enumerate}
\end{proposition}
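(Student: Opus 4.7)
For (1), the contracting homotopy from the proof of Lemma \ref{lemma:contractibility} is linear in the form, so tensoring with $\mathrm{id}_V$---working on each quotient $V/F^pV$ first and taking the inverse limit via Mittag-Leffler---extends it to a contraction of $\Omega_\bullet^k(V)$.

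For (2), by Lemma \ref{lemma:poincare lemma} the unit $\kk \to \Omega_n$ is a quasi-isomorphism. Induction on the filtration combined with Mittag-Leffler shows $V \to \Omega_n(V)$ remains a quasi-isomorphism for every $n$. Since all face and degeneracy maps of $\Omega_\bullet$ preserve the unit, they induce the identity on the canonical $H^k(V) \cong H^k(\Omega_n(V))$, so $H^k(\Omega_\bullet(V))$ is the constant (hence discrete) simplicial vector space $H^k(V)$.

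For (3), I first check that $f$ is well-defined on homology. The form $\omega^k$ satisfies $d\omega^k = 0$ and $\partial_i\omega^k = 0$ for all $0\leq i\leq k$: the case $i\geq 1$ because setting $t_i=0$ kills $dt_i$; the case $i=0$ because after substituting $dt_0 = -\sum_{j\geq 1}dt_j$ in $\Omega_{k-1}$ the resulting top-degree form has a repeated factor. Hence $\omega^k\otimes x$ lies in $N_k(Z^0\Omega_\bullet(V))$ for every $x\in Z^{-k}V$. For $x = dy$, let $\alpha\in\Omega_{k+1}^k$ be the image of $\omega^k$ under the explicit contracting-homotopy formula from the proof of Lemma \ref{lemma:contractibility} (so $\partial_0\alpha=\omega^k$ and $\partial_i\alpha=0$ for $i>0$); then $\mu = \alpha\otimes x + (-1)^k d\alpha\otimes y$ is a $0$-cocycle (the cross-terms cancel since $dx=0$), lies in $N_{k+1}(Z^0\Omega_\bullet(V))$, and satisfies $\partial_0\mu = \omega^k\otimes x$. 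To show $f$ is an isomorphism, I combine the long exact sequences of $0\to Z^qC\to C^q\to B^{q+1}C\to 0$ and $0\to B^{q+1}C\to Z^{q+1}C\to H^{q+1}C\to 0$ (with $C = \Omega_\bullet(V)$): parts (1) and (2) (contractibility of $C^q$ and constancy of $H^{q+1}C = H^{q+1}V$) make both LES yield isomorphisms $\pi_p(Z^qC)\cong\pi_{p-1}(Z^{q-1}C)$ for all $p\geq 1$, iterating to $\pi_k(Z^0C)\cong\pi_0(Z^{-k}C)$. The base case $\pi_0(Z^{-k}C) = H^{-k}(V)$ follows from a direct computation of path components: a $1$-simplex $\nu\in Z^{-k}\Omega_1(V)$ with $\partial_0\nu = y,\partial_1\nu = x$ yields $x - y = -d\bigl(\int_{\Delta^1}\nu_{(1)}\bigr)$ via Stokes' theorem applied to the bidegree $(1,-k)$ component of $d\nu = 0$. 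The main obstacle is confirming that the composite abstract isomorphism is realized concretely by $f$; this is handled by checking that the integration map $g([\nu]) = [\int_{\Delta^k}\nu_k]$ satisfies $g\circ f = \mathrm{id}$ (since $\int_{\Delta^k}\omega^k = 1$), forcing $g = f^{-1}$.

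For (4), if $V = V^{\geq 0}$ then $(\Omega_n\ctensor V)^0 = \Omega_n^0\ctensor V^0$, and $d\nu = 0$ forces the $\Omega_n^0$-coefficients of $\nu$ to be scalars, so $Z^0\Omega_n(V) = Z^0V$ for every $n$: a constant discrete simplicial vector space. Conversely, take the smallest $p > 0$ with $V^{-p}\neq 0$ and $v\in V^{-p}\setminus\{0\}$. If $p > 1$, then $dv\in V^{-(p-1)} = 0$ and $\omega^p\otimes v \in Z^0\Omega_p(V)$ is non-degenerate (since $\Omega_{p-1}^p = 0$ rules out a degenerate lift in the $\Omega_p^p$-component). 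If $p = 1$, the element $\nu = \omega^1\otimes v + t_1\otimes dv$ is a $0$-cocycle (direct check using $dt_1 = \omega^1$) with nonzero $\Omega_1^1\ctensor V^{-1}$ component, hence non-degenerate.
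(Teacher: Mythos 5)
Parts (1), (2), and (4) are correct and essentially follow the paper's route; your converse direction in (4) (exhibiting the explicit non-degenerate $0$-cocycles $\omega^p\otimes v$, resp.\ $\omega^1\otimes v+t_1\otimes dv$) is a valid variant of the paper's observation that $B^0$ fails to be discrete. In (3) you set up exactly the same two short exact sequences as the paper and correctly obtain the abstract isomorphisms $\pi_k(Z^0)\cong\pi_0(Z^{-k})\cong H^{-k}(V)$, and your direct verification that $f$ is well defined on homology classes is fine.

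The gap is in the last step of (3). From $g\circ f=\mathrm{id}$ you may conclude that $f$ is a split monomorphism and $g$ a split epimorphism, but together with the mere \emph{existence} of an abstract isomorphism $\pi_k(Z^0\Omega_\bullet(V))\cong H^{-k}(V)$ this does not force $f$ to be surjective: when $H^{-k}(V)$ is infinite dimensional (and no finite-type hypothesis is in play here) a vector space admits proper split subspaces isomorphic to itself, so ``$gf=\mathrm{id}$, hence $g=f^{-1}$'' is a non sequitur. (A smaller unverified point: that $g$ descends to homotopy classes at all requires the Stokes computation that the paper only carries out later, in the proof of Theorem \ref{thm:Lie Dold Kan}.) To close the gap you must actually identify $f$ (or $g$) with the composite of the connecting isomorphisms rather than merely produce a one-sided inverse. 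The paper does this by noting that the isomorphism $\pi_\ell(Z^q)\to\pi_{\ell-1}(Z^{q-1})$ sends $[dz]$ to $[\partial z]$ and then using $\omega^k=d\widehat{\omega}^{k-1}$ and $\partial\widehat{\omega}^{k-1}=\omega^{k-1}$ to see that $[\omega^k\otimes x]$ is carried step by step to $[x]\in H^{-k}(V)$. Alternatively, you could show via Stokes' theorem that your $g$ commutes with the connecting maps and restricts to the identity on $\pi_0(Z^{-k})=H^{-k}(V)$; that makes $g$ bijective, and only then does $gf=\mathrm{id}$ give $f=g^{-1}$.
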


\begin{proof}
The first two claims are direct consequences of Lemma \ref{lemma:poincare lemma} and Lemma \ref{lemma:contractibility}. Towards the third claim, let $\Omega^k$, $Z^k$, $B^k$, and $H^k$, denote the simplicial vector spaces of $k$-dimensional cochains, cocycles, coboundaries, and cohomology, respectively, of $\Omega_\bullet(V)$. The first statement together with the long exact sequence of homotopy groups associated to the short exact sequence of simplicial vector spaces
$$0 \to Z^{k-1} \to \Omega^{k-1} \to B^k \to 0$$
show that $\pi_\ell(B^k) \cong \pi_{\ell-1}(Z^{k-1})$ for all integers $k,\ell$. In particular, $\pi_0(B^{k}) = 0$. Similarly, using the second statement and the sequence
$$0\to B^k \to Z^k \to H^k \to 0,$$
one sees that $\pi_\ell(B^k) \cong \pi_\ell(Z^k)$ for all $\ell > 0$ and $\pi_0(Z^k) \cong \pi_0(H^k) = H^k(V)$ for all $k$. Combining the above facts yields natural isomorphisms
$$\pi_k(Z^0) \cong \pi_{k-1}(Z^{-1}) \cong \ldots \cong \pi_0(Z^{-k}) \cong H^{-k}(V).$$
Explicitly, the isomorphism $\pi_\ell(Z^k) \to \pi_{\ell-1}(Z^{k-1})$ for $\ell>0$ sends an element of the form $[dz]$ to $[\partial z]$. Using this and the equations $\vol{k} = d\volo{k-1}$ and $\partial \volo{k-1} = \vol{k-1}$, where $\volo{k-1}\in \Omega_{k}^{k-1}$ denotes the differential form
$$\volo{k-1} = (k-1)!\sum_{i=1}^{k} (-1)^{i-1} t_i dt_1\ldots \widehat{dt_i} \ldots dt_{k},$$
one sees that the class $[\omega^k \tensor x] \in \pi_k(Z^0)$ corresponds to $[x] \in H^{-k}(V)$.

Since $H^0$ is discrete, $Z^0$ is discrete if and only if $B^0$ is. If $x$ is a non-zero element of $V^{-k}$ for $k>0$, then $d(t_0dt_1\ldots dt_{k-1}\tensor x)$ represents a non-zero element of the cokernel of $B^0V \to B^0\Omega_{k-1}(V)$, showing $B^0$ is not discrete. Conversely, if $V=V^{\geq 0}$, then $\Omega = \Omega^{\geq 0}$, whence $B^0$ is zero and in particular discrete.
\end{proof}

\begin{theorem} \label{thm:homotopy of exp}
There is a natural isomorphism of groups
\begin{equation} \label{eq:pi_0}
\exp \big(H_0(L)\big) \to \pi_0\big(\exp_\bullet(L)\big)
\end{equation}
and for every $k\geq 1$ a natural isomorphism of abelian groups
$$H_k(L) \to \pi_k\big(\exp_\bullet(L)\big)$$
compatible with Samelson products and the actions of the groups in \eqref{eq:pi_0}. In both cases, the map is given by
$$[x] \mapsto \big[\omega^k \tensor x\big]$$
for cycles $x\in L_k$.
\end{theorem}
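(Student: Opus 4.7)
The plan is to reduce the statement to the abelian case handled by Proposition \ref{prop:abelian} and then deal with the non-abelian (BCH) part of the group structure separately. Since the underlying simplicial set of $\exp_\bullet(L)$ is $Z^0\Omega_\bullet(L)$, we have $\pi_k(\exp_\bullet(L)) = \pi_k(Z^0\Omega_\bullet(L))$ as pointed sets. Applying Proposition \ref{prop:abelian}(3) to the complete filtered cochain complex underlying $L$ yields the natural bijection $H_k(L)\xrightarrow{\cong}\pi_k(\exp_\bullet(L))$ sending $[x]\mapsto[\vol{k}\tensor x]$ for every $k\geq 0$. It remains to identify the group structures on both sides and check compatibility with Samelson products and the $\pi_0$-action.

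For $k\geq 1$, the simplicial set $Z^0\Omega_\bullet(L)$ carries two simplicial H-space structures with common strict unit $0$, namely addition (from the vector space structure) and the BCH multiplication of $\exp_\bullet(L)$. By a standard Eckmann--Hilton argument, any simplicial H-space structure with strict unit induces the canonical loop-concatenation abelian group operation on $\pi_k$ for $k\geq 1$; hence the two structures yield the same group on $\pi_k$. Consequently, the bijection from Proposition \ref{prop:abelian}(3) is automatically an isomorphism of abelian groups in this range.

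For $k=0$, I would first verify that $B^0(L)$ is a Lie ideal of $Z^0(L)$: if $x=du$ and $y\in Z^0(L)$, then $[x,y]=d[u,y]$ because $dy=0$, and the same derivation trick shows that every iterated bracket containing at least one exact cycle is itself exact. It follows that $H_0(L)$ inherits a Lie algebra structure, $B^0(L)$ is normal in the BCH group $\exp(Z^0(L))$, and BCH descends to give $\exp(H_0(L)) = \exp(Z^0(L))/B^0(L)$. On the other hand, $\pi_0(\exp_\bullet(L))$ is the quotient of the BCH group $\exp(Z^0(L))$ by an equivalence relation which, by Proposition \ref{prop:abelian}(3), is precisely $B^0(L)$-translation. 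The two quotients agree as groups, giving the desired isomorphism $\exp(H_0(L))\cong\pi_0(\exp_\bullet(L))$.

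For compatibility with Samelson products and the $\pi_0$-action, the key observation is that the group commutator in $\exp_\bullet(L)$ has $[x,y]$ as leading term, with higher-order corrections being iterated Lie brackets of length $\geq 3$ (Dynkin's formula). Only the bilinear leading term contributes to the Samelson product on $\pi_p\times\pi_q$, so the Samelson product on $\pi_*(\exp_\bullet(L))$ corresponds to the graded Lie bracket on $H_*(L)$ under the identification $[x]\mapsto[\vol{k}\tensor x]$. Similarly, conjugation by $\exp(x)$ in $\exp_\bullet(L)$ equals $\exp(\ad_x)$, whose linearization gives the adjoint action of $H_0(L)$ on $H_k(L)$, establishing the $\pi_0$-action compatibility. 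The main obstacle is making the ``higher corrections vanish'' argument into a clean simplicial statement; one natural route is to exhibit an explicit $(p+q)$-simplex realising both the group commutator of $\vol{p}\tensor x$ and $\vol{q}\tensor y$ (via a suitable shuffle pairing) and the target class $[\vol{p+q}\tensor[x,y]]$, and to produce a simplicial homotopy between them using the exactness of $\overline{\Omega}_\bullet$ from Lemma \ref{lemma:poincare lemma}.
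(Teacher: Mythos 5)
Your reduction of the underlying bijection to Proposition \ref{prop:abelian}(3) and your Eckmann--Hilton argument for $k\geq 1$ match the paper's proof. For $k=0$ you diverge: you identify $\pi_0(\exp_\bullet(L))$ as the quotient of the BCH group $\exp(Z^0L)$ by the normal subgroup $B^0L$ (using that an iterated bracket of cocycles with one exact entry is exact), whereas the paper transports the group structure along the zig-zag of $H_0$-isomorphisms $L \leftarrow L\langle 0\rangle \rightarrow H_0(L)$ and uses that $\exp_\bullet(H_0(L))$ is the discrete group $\exp(H_0(L))$. Both work; your route is more hands-on, while the paper's zig-zag has the advantage of disposing of the $\pi_0$-action on the higher homotopy groups by the same formal argument. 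Your observation that conjugation by a vertex $x$ sends $\vol{k}\tensor y$ to $\vol{k}\tensor e^{\ad_x}(y)$ is essentially a correct direct verification of that compatibility, though the induced action on $H_k(L)$ is the full $e^{\ad}$, not its ``linearization.''

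The genuine gap is the Samelson product claim, and you have located it yourself: the assertion that ``only the bilinear leading term contributes'' is left unjustified, and the proposed fix (an explicit simplex plus a homotopy built from Lemma \ref{lemma:poincare lemma}) is not carried out. The paper proves this part separately, in Corollary \ref{cor:samelson}, using two concrete inputs your sketch is missing. First, Curtis' explicit formula expresses the Samelson product of $\vol{p}\tensor x$ and $\vol{q}\tensor y$ as an ordered product of BCH-commutators $\big[s_\nu\vol{p}\tensor x, s_\mu\vol{q}\tensor y\big]_{BCH}^{\sgn(\mu,\nu)}$ over all $(p,q)$-shuffles. Second, the higher BCH terms vanish \emph{identically}, not merely up to homotopy: any product of three or more of the forms $s_\nu\vol{p}$, $s_\mu\vol{q}$ lies in $\Omega_{p+q}^k$ with $k>p+q$, which is zero. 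This collapses the Samelson product on the nose to the shuffle formula \eqref{eq:Lie bracket} for the Lie bracket on the normalized chains $NZ^0\Omega_\bullet(L)$, with no homotopy needed and no issue about the order of the factors. The remaining step, identifying the class of $\sum_{(\mu,\nu)}\sgn(\mu,\nu)\, s_\nu\vol{p}\, s_\mu\vol{q}\tensor[x,y]$ with $\big[\vol{p+q}\tensor[x,y]\big]$, is supplied by the integration quasi-isomorphism $I$ of Theorem \ref{thm:Lie Dold Kan} together with $\int_{\Delta^{p+q}}\sum_{(\mu,\nu)}\sgn(\mu,\nu)\, s_\nu\vol{p}\, s_\mu\vol{q} = \int_{\Delta^p}\vol{p}\cdot\int_{\Delta^q}\vol{q}=1$. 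Without these three ingredients --- Curtis' formula, the degree vanishing, and the integration map --- the Samelson compatibility is not established.
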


\begin{proof}
The underlying simplicial set of $\exp_\bullet(L)$ is $Z^0\Omega_\bullet(L)$ so Proposition \ref{prop:abelian} yields a natural bijection
$H_k(L) \to \pi_k\big(\exp_\bullet(L) \big)$ for all $k\geq 0$, which is an isomorphism of abelian groups for $k>0$. It remains to identify the group structure on $\pi_0(\exp_\bullet(L))$ and its action on the higher homotopy groups that come from the simplicial group structure on $\exp_\bullet(L)$. To do this, one can argue using the zig-zag of $H_0$-isomorphisms
\begin{equation} \label{eq:H_0}
L \leftarrow L\langle 0\rangle \rightarrow H_0(L).
\end{equation}
Here $L\langle k\rangle \subseteq L$ is defined by $L\langle k\rangle_i = L_i$ for $i>k$, $L\langle k \rangle_k = Z_kL$ and $L\langle 0\rangle_i = 0$ for $i<k$. The induced maps on $\pi_0(\exp_\bullet(-))$ are group homomorphisms and, by what we have just seen, bijections. Since $\exp_\bullet(H_0(L))$ is isomorphic to the discrete simplicial group $\exp (H_0(L))$, the claim about the group structure follows. The claim about the action on higher homotopy groups can be verified similarly by considering the zig-zag of $H_k$-isomorphisms
$$L \leftarrow L\langle k\rangle \rightarrow H_k(L)$$
with the actions of the $0$-cocycles of the dg Lie algebras in \eqref{eq:H_0}. The statement about Samelson products is proved in Corollary \ref{cor:samelson} below.
\end{proof}

\begin{corollary}
For every $\tau\in \MC(L)$ there is a natural isomorphism of groups
\begin{equation} \label{eq:groups}
\exp\big(H_0(L_\tau)\big)\to \pi_1\big(\MC_\bullet(L),\tau\big),
\end{equation}
and for every $k\geq 1$ a natural isomorphism of abelian groups
$$H_k(L_\tau) \to \pi_{k+1}\big(\MC_\bullet(L),\tau\big),$$
compatible with the actions of the groups in \eqref{eq:groups} and with Whitehead products up to a sign. In both cases, the map is given by
$$[x] \mapsto \big[\tau\tensor 1 - \omega^{k+1}\tensor x\big]$$
for $x\in L_k$ such that $d_\tau(x)= 0$.
\end{corollary}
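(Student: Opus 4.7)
The plan is to compose the isomorphisms of Theorem~\ref{thm:mc}(2) and Theorem~\ref{thm:homotopy of exp} applied to the twisted complete dg Lie algebra $L_\tau$: the former gives $\exp_\bullet(L_\tau)\simeq \Omega\MC_\bullet(L)_\tau$, so $\pi_k(\exp_\bullet(L_\tau))\cong \pi_{k+1}(\MC_\bullet(L),\tau)$ for every $k\geq 0$, and the latter identifies the source with $\exp(H_0(L_\tau))$ when $k=0$ and with $H_k(L_\tau)$ when $k\geq 1$. The group structure on $\pi_1(\MC_\bullet(L),\tau)$ and the $\pi_1$-action on the higher homotopy groups come from the simplicial group structure on the loop-space model $\exp_\bullet(L_\tau)$, so the asserted compatibilities are immediate consequences of the corresponding statements in Theorem~\ref{thm:homotopy of exp}. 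The Whitehead product compatibility follows from the classical identification, up to a sign, of Whitehead products on $\pi_*(X)$ with Samelson products on $\pi_*(\Omega X)$, combined with the Samelson part of Theorem~\ref{thm:homotopy of exp}.

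It remains to verify the explicit formula. First I would check that $\sigma = \tau\otimes 1 - \omega^{k+1}\otimes x$ is a Maurer--Cartan element of $\Omega_{k+1}(L)$: a direct calculation using $d\omega^{k+1} = 0$, $d_\tau x = 0$, and $(\omega^{k+1})^2 = 0$ (which holds in $\Omega_{k+1}$ for dimension reasons) yields $d\sigma + \tfrac{1}{2}[\sigma,\sigma] = 0$. Next, using the explicit expression $\omega^{k+1} = (k+1)!\,dt_1\cdots dt_{k+1}$, one checks $\partial_i\omega^{k+1} = 0$ for all $i$: for $i\geq 1$ the factor $dt_i$ is killed, and for $i=0$ the resulting top-dimensional form in $\Omega_k$ vanishes upon imposing $\sum t_j = 1$. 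Hence every face of $\sigma$ equals $\tau$, so $\sigma$ represents a class in $\pi_{k+1}(\MC_\bullet(L),\tau)$.

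To identify this class with the one corresponding to $[x]$, the key step is to produce an explicit lift $g\in \gauge_{k+1}(L)$ with $g\cdot\tau = \sigma$. I would take $g = \widehat\omega^k\otimes x$: using $d\widehat\omega^k = \omega^{k+1}$ one finds $d_\tau g = \omega^{k+1}\otimes x$, and all higher Baker--Campbell--Hausdorff corrections $\ad_g^j(d_\tau g)$ vanish because either $[x,x] = 0$ (when $|x|$ is even, which includes $k=0$) or the form factor $\widehat\omega^k\cdot\omega^{k+1}$ of total degree $2k+1$ exceeds the top degree $k+1$ of $\Omega_{k+1}$ (when $k\geq 1$). Since $\partial_0\widehat\omega^k = \omega^k$ and $\partial_i\widehat\omega^k = 0$ for $i\geq 1$, the simplex $g$ is a horn lift whose zeroth face $\partial_0 g = \omega^k\otimes x\in \exp_k(L_\tau)$ is the image of $[\sigma]$ under the connecting homomorphism of the principal $\exp_\bullet(L_\tau)$-bundle $\gauge_\bullet(L)\to \MC_\bullet(L)_\tau$; by Theorem~\ref{thm:homotopy of exp} this represents $[x]\in H_k(L_\tau)$, as required.

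The main obstacle is the explicit lifting step: ensuring that all BCH corrections vanish, identifying the correct face, and aligning signs. Each individual check is a direct degree or parity argument using the specific forms $\omega^{k+1}$ and $\widehat\omega^k$, but together they are what makes the concrete formula $[x] \mapsto [\tau\otimes 1 - \omega^{k+1}\otimes x]$ correctly match the abstract identification of $H_k(L_\tau)$ with $\pi_{k+1}(\MC_\bullet(L),\tau)$.
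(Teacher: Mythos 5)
Your proposal is correct and follows essentially the same route as the paper: combine Theorem~\ref{thm:mc}(2) with Theorem~\ref{thm:homotopy of exp} and the classical Whitehead--Samelson comparison, then pin down the explicit formula via the connecting homomorphism of the bundle $\exp_\bullet(L_\tau)\to\gauge_\bullet(L)\to\MC_\bullet(L)_\tau$, using precisely the paper's key observation that the gauge action of $\widehat{\omega}^{k}\tensor x$ on $\tau\tensor 1$ yields $\tau\tensor 1-\omega^{k+1}\tensor x$. Your verifications of the face identities and the vanishing of the higher BCH terms are accurate and simply make explicit what the paper leaves to the reader.
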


\begin{proof}
This follows from Theorem \ref{thm:homotopy of exp} and Theorem \ref{thm:mc}(2) together with the fact that the canonical isomorphism $\pi_{*+1}(X) \to \pi_*(\Omega X)$ takes Whitehead products to Samelson products up to a sign (see \cite[p.197]{Curtis}) and is compatible with the relevant group actions. To get the explicit formula, one can compute the connecting homomorphism $\partial \colon \pi_{k+1}(\MC_\bullet(L),\tau) \to \pi_k(\exp_\bullet(L_\tau))$ of the fibration $\exp_\bullet(L_\tau) \to \gauge_\bullet(L) \to \MC_\bullet(L)_\tau$ using the recipe in \cite[p.28]{GJ}. The key observation is that the gauge action of $\volo{k}\tensor x$ on $\tau\tensor 1$ is equal to $\tau\tensor 1 - \vol{k+1}\tensor x$. This implies that the connecting homomorphism sends $[\tau\tensor 1 - \omega^{k+1}\tensor x]$ to $[\omega^k \tensor x]$.
\end{proof}

\begin{remark}
This simplifies the computation of the homotopy groups of $\MC_\bullet(L)$ given in \cite{B} in the case when $L$ is a dg Lie algebra. It also enhances the computation by showing that the isomorphism is compatible with Whitehead products (up to a sign) and the action of the fundamental group.
\end{remark}

\section{A Dold--Kan correspondence for Lie algebras}
The normalized chains $N$ is a (lax) symmetric monoidal functor from simplicial abelian groups to chain complexes. In particular, 
if $L_\bullet$ is a simplicial Lie algebra, then $NL_\bullet$ is a dg Lie algebra. Explicitly, the Lie bracket $[x,y]\in NL_{p+q}$ of $x\in NL_p$ and $y\in NL_q$ is given by
\begin{equation} \label{eq:Lie bracket}
[x,y] = \sum_{(\mu,\nu)} \sgn(\mu,\nu) \big[s_\nu x,s_\mu y\big].
\end{equation}
The sum is over all $(p,q)$-shuffles $(\mu,\nu) = (\mu_1,\ldots,\mu_p,\nu_1,\ldots,\nu_q)$, i.e., permutations of $0,1,\ldots,p+q-1$ that satisfy $\mu_1< \ldots < \mu_p$ and $\nu_1 < \ldots < \nu_q$, and $s_\mu = s_{\mu_p} \ldots s_{\mu_1}$ and $s_\nu = s_{\nu_q} \ldots s_{\nu_1}$, and $\sgn(\mu,\nu) = \pm 1$ is the sign of the shuffle, cf.~e.g.~\cite[p.243]{Mac Lane}. The inverse functor $\Gamma$ from chain complexes to simplicial abelian groups featured in the Dold--Kan correspondence (cf.~\cite[Corollary III.2.3]{GJ}) is not symmetric monoidal\footnote{but it is $E_\infty$-monoidal \cite{Richter}} so the simplicial abelian group $\Gamma L$ does not in general admit a Lie algebra structure when $L$ is a dg Lie algebra. The construction $Z^0\Omega_\bullet(L)$ could be viewed as a fattened up model for $\Gamma L$ that admits a Lie algebra structure.

\begin{theorem} \label{thm:Lie Dold Kan}
Let $L$ be a complete dg Lie algebra. The map
\begin{gather*}
I\colon \Omega_n^n\tensor L_n  \to L_n, \quad 
I(\omega \tensor x)  = (-1)^{\binom{n}{2}}\int_{\Delta^n} \omega \cdot x,
\end{gather*}
induces a natural quasi-isomorphism of dg Lie algebras
$$I\colon NL_\bullet  \to L,$$
where $L_\bullet$ denotes the simplicial Lie algebra $Z^0\Omega_\bullet(L)$.
\end{theorem}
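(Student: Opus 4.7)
My plan is to extend $I$ linearly to a map $Z^0\Omega_n(L)\to L_n$ by projecting onto the bidegree $(n,-n)$ component and then integrating: writing $\xi\in Z^0\Omega_n(L)$ as $\sum_k\omega_k\otimes x_{-k}$ with $\omega_k\in\Omega_n^k$ and $x_{-k}\in L^{-k}$, set $I(\xi) = (-1)^{\binom n 2}\int_{\Delta^n}\omega_n\cdot x_{-n}$. Degenerate elements vanish under $I$ because $\Omega_{n-1}^n = 0$, so $I$ descends to $NL_\bullet$. Naturality in $L$ is immediate from the construction, so there remain three things to verify: $I$ is a chain map, preserves Lie brackets, and is a quasi-isomorphism.

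For the chain map property, using the convention $NL_n = \bigcap_{i>0}\ker\partial_i$ with differential $\partial_0$, the $(n,-(n-1))$-bidegree component of the cocycle equation $d\xi = 0$ in $\Omega_n(L)$ reads $d_\Omega\omega_{n-1}\otimes x_{-(n-1)} + (-1)^n\omega_n\otimes d_L x_{-n} = 0$, giving $\omega_n\otimes d_L x_{-n} = (-1)^{n+1}d_\Omega\omega_{n-1}\otimes x_{-(n-1)}$. Applying Stokes' theorem to $\int_{\Delta^n}d\omega_{n-1}$ and using that $\partial_i\xi = 0$ for $i > 0$ kills every face term except $i = 0$, leaving $\int_{\Delta^n}\omega_n\cdot d_L x_{-n} = (-1)^{n+1}\int_{\Delta^{n-1}}\partial_0\omega_{n-1}\cdot x_{-(n-1)}$. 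The parity identity $\binom n 2 + n + 1\equiv \binom{n-1}2\pmod 2$ then yields $d_L I(\xi) = I(\partial_0\xi)$.

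The bracket compatibility is the main technical obstacle. The crucial observation is a bidegree restriction: for $x = \sum_j\omega_j\otimes a_{-j}\in NL_p$ and $y = \sum_k\eta_k\otimes b_{-k}\in NL_q$, a form on $\Delta^{p+q}$ of the shape $s_\nu\omega_j\wedge s_\mu\eta_k$ has form degree $j+k$, and the constraints $j\le p$, $k\le q$ together with $j+k = p+q$ force $j = p$ and $k = q$. Hence only the pure top-form parts of $x$ and $y$ contribute to the $(p+q,-(p+q))$-component of $[s_\nu x,s_\mu y]$, which equals $(-1)^{pq}\,s_\nu\omega_p\wedge s_\mu\eta_q\otimes[a_{-p},b_{-q}]$ (with Koszul sign coming from commuting $a_{-p}$ past $s_\mu\eta_q$). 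Using $\binom{p+q}2\equiv\binom p 2+\binom q 2+pq\pmod 2$, the identity $I([x,y]) = [I(x),I(y)]$ reduces to the shuffle integration formula
$$\int_{\Delta^{p+q}}\sum_{(\mu,\nu)}\sgn(\mu,\nu)\,s_\nu\omega_p\wedge s_\mu\eta_q \;=\; \int_{\Delta^p}\omega_p\cdot\int_{\Delta^q}\eta_q$$
for top forms $\omega_p\in\Omega_p^p$ and $\eta_q\in\Omega_q^q$. This is the classical Eilenberg--Zilber identity for forms, equivalent to Fubini applied to the standard affine decomposition of $\Delta^p\times\Delta^q$ into $(p+q)$-simplices indexed by $(p,q)$-shuffles; verifying it together with the sign bookkeeping is the main computational step.

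Finally, for the quasi-isomorphism, combine the Dold--Kan correspondence with Proposition \ref{prop:abelian}(3): the canonical isomorphism $H_k(L)\to\pi_k(Z^0\Omega_\bullet(L))\cong H_k(NL_\bullet)$ sends $[x]$ to the class of $\vol{k}\otimes x$, and since $\int_{\Delta^k}\vol{k} = 1$ we get $I(\vol{k}\otimes x) = (-1)^{\binom k 2}\cdot x$. Hence $H_k(I)$ is multiplication by $(-1)^{\binom k 2}$, an isomorphism.
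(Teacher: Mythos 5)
Your proposal is correct and follows essentially the same route as the paper: extend $I$ by projecting to the top bidegree, verify the chain map property via Stokes' theorem applied to the top component of the cocycle equation, reduce bracket compatibility to the shuffle/Fubini identity for the decomposition of $\Delta^p\times\Delta^q$, and deduce the quasi-isomorphism property by composing with the isomorphism of Proposition \ref{prop:abelian}(3). Your sign bookkeeping (the identity $\binom{p+q}{2}\equiv\binom{p}{2}+\binom{q}{2}+pq \pmod 2$ and the observation that $H_k(I)\circ f$ is multiplication by $(-1)^{\binom{k}{2}}$ rather than the identity) is in fact slightly more careful than the paper's.
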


\begin{proof}
Elements of $\Omega_n^0(L)$ are of the form $\xi = \sum_{i=0}^n \omega_i\tensor x_i$, where $\omega_i \in \Omega_n^i$ and $x_i\in L_i$. If $\xi$ is a cycle, then a look at the component of $d\xi=0$ in $\Omega_n^n\tensor L_{n-1}$ shows
$$(-1)^n \omega_n \tensor dx_n + d\omega_{n-1}\tensor x_{n-1} = 0,$$
which implies
$$(-1)^n \int_{\Delta^n} \omega_n \cdot dx_n + \int_{\Delta_n} d\omega_{n-1} \cdot x_{n-1} = 0.$$
Since $\int_{\Delta^n} d\omega_{n-1} = \int_{\Delta^{n-1}} \partial\omega_{n-1}$ by Stokes' theorem, this shows that $I(\partial \xi) = dI(\xi)$, so the restriction of $I$ to $N Z^0\Omega_\bullet(L)$ is a chain map.
The verification that $I$ respects Lie brackets uses the formula
$$\int_{\Delta^p} \alpha \cdot \int_{\Delta^q} \beta = \sum_{(\mu,\nu)} \sgn(\mu,\nu) \int_{\Delta^{p+q}} s_\nu \alpha \cdot s_\mu \beta$$
which holds as both sides compute the integral of the differential $(p+q)$-form $\alpha \times \beta$ over $\Delta^p \times \Delta^q$, on one hand using the Fubini theorem and on the other hand using the standard decomposition of $\Delta^p \times \Delta^q$ into a union of $(p+q)$-simplices (cf.~e.g.~\cite[pp.~243--244]{Mac Lane}).
Using $\int_{\Delta^n} \omega^n = 1$, one sees that the map induced by $I$ in homology is left inverse to the isomorphism $f$ in Proposition \ref{prop:abelian}(3). This implies that $I$ is a quasi-isomorphism and, as a by-product, that $f$ respects Lie brackets.
\end{proof}

\begin{corollary} \label{cor:samelson}
Let $L$ be a complete dg Lie algebra. The isomorphism
$$f\colon H_k(L) \to \pi_k(\exp_\bullet(L)),\quad f[x] = [\omega^k \tensor x],$$
takes Lie brackets to Samelson products.
\end{corollary}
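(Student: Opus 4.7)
The plan is to leverage Theorem \ref{thm:Lie Dold Kan}, which provides a dg Lie algebra quasi-isomorphism $I \colon N Z^0\Omega_\bullet(L) \to L$, together with the classical matching of Samelson products with the shuffle Lie bracket \eqref{eq:Lie bracket} on the normalised chains of a simplicial Lie algebra.

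Concretely, I would proceed as follows. First, Theorem \ref{thm:Lie Dold Kan} implies that $H_*(I)$ is an isomorphism of graded Lie algebras, where the bracket on $H_*(NZ^0\Omega_\bullet(L))$ is the one induced by the shuffle formula \eqref{eq:Lie bracket} applied levelwise to the simplicial Lie algebra $Z^0\Omega_\bullet(L)$. Second, the computation appearing in the proof of that theorem, together with the identity $\int_{\Delta^k}\omega^k = 1$, shows that $H_*(I) \circ f = \mathrm{id}_{H_*(L)}$, so $f$ is the inverse of $H_*(I)$ and in particular respects Lie brackets with the target carrying the shuffle bracket. Third, I would verify that, on the common underlying object $\pi_*(\exp_\bullet(L)) = \pi_*(Z^0\Omega_\bullet(L)) = H_*(NZ^0\Omega_\bullet(L))$, the shuffle bracket \eqref{eq:Lie bracket} agrees with the Samelson product coming from the simplicial group structure on $\exp_\bullet(L)$. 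Combining these three steps yields the corollary.

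The main obstacle is this last step. The Samelson product is computed by forming the simplicial commutator $ghg^{-1}h^{-1}$ in $\exp_\bullet(L)$ and precomposing with the Eilenberg--Zilber shuffle map, and by BCH one has $\xi\cdot\eta\cdot(-\xi)\cdot(-\eta) = [\xi,\eta] + (\text{iterated brackets of length} \ge 3)$. The linear term produces exactly the shuffle bracket \eqref{eq:Lie bracket}, while each higher iterated term involves a repeated factor and thus, after shuffling, lies in the image of a degeneracy and vanishes in the normalised complex. This matching is classical (cf.~\cite{Curtis}), but is the only nontrivial bookkeeping in the argument; if one prefers to avoid it entirely, one may first reduce to nilpotent $L$ by working with the tower $L/F^pL$, where BCH is a finite polynomial, and then pass to the inverse limit using naturality of $f$ and the Samelson product.
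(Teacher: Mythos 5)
Your overall strategy coincides with the paper's: both arguments reduce the corollary to (a) the fact that $f$ is inverse to $H_*(I)$, where $I$ is the bracket-preserving quasi-isomorphism of Theorem \ref{thm:Lie Dold Kan}, and (b) the identification, via Curtis' commutator formula and the Baker--Campbell--Hausdorff expansion, of the Samelson product on $\pi_*(\exp_\bullet(L))$ with the shuffle bracket \eqref{eq:Lie bracket} on $H_*(NZ^0\Omega_\bullet(L))$. Your first two steps are exactly what the paper does.

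The gap is in your justification of the third step, which you yourself single out as the only nontrivial point. You assert that each higher BCH term ``involves a repeated factor and thus, after shuffling, lies in the image of a degeneracy and vanishes in the normalised complex.'' This is not substantiated: in a term such as $\big[s_\nu a,[s_\nu a,s_\mu b]\big]$ with $a=\omega^p\tensor x$ and $b=\omega^q\tensor y$, only one of the factors of the inner bracket lies in the image of $s_\nu$, and a bracket of a degenerate element with a non-degenerate one need not be degenerate; nor do you argue that the sum over all shuffles of such terms is degenerate or a boundary. The paper's actual mechanism is more elementary and is special to the chosen representatives: every iterated bracket of length $\geq 3$ in the elements $s_\nu\omega^p\tensor x$ and $s_\mu\omega^q\tensor y$ carries a coefficient form of degree strictly greater than $p+q$, and $\Omega_{p+q}^k=0$ for $k>p+q$, so all higher-order terms --- both those of the BCH commutators and those of the BCH product taken over the shuffles --- vanish identically, not merely modulo degenerates. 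Your proposed fallback of passing to the nilpotent quotients $L/F^pL$ does not repair this: finiteness of the BCH series was never the issue, and the identification of the Samelson product with the shuffle bracket still has to be proved for nilpotent $L$. Replacing your degeneracy claim with the degree count above closes the gap and recovers the paper's proof.
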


\begin{proof}
By Curtis' formula for Samelson products \cite[p.197]{Curtis}, we have that
\begin{equation} \label{eq:curtis}
\big\langle\omega^p \tensor x,\omega^q \tensor y\big\rangle = \prod_{(\mu,\nu)} \big[s_\nu\omega^p\tensor x, s_\mu\omega^q \tensor y\big ]_{BCH}^{\sgn(\mu,\nu)},
\end{equation}
where the product over the $(p,q)$-shuffles $(\mu,\nu)$ is taken in a certain order that will turn out not to matter for us. The commutator with respect to the BCH product satisfies
$$[a,b]_{BCH} = [a,b] + \mbox{(higher terms)}.$$
All products of length $>2$ formed out of $s_\nu\omega^p$ and $s_\mu\omega^q$ are zero since $\Omega_{p+q}^k = 0$ for $k>p+q$, so it follows that the higher terms vanish when $a= s_\nu\omega^p \tensor x$ and $b = s_\mu\omega^q \tensor y$. Similarly, since the BCH product $*$ satisfies
$$a* b = a+b + \mbox{(higher terms)},$$
the product in \eqref{eq:curtis} reduces to a sum. This shows that the Samelson product on the homotopy groups of $\exp_\bullet(L)$ agrees with the Lie bracket \eqref{eq:Lie bracket} under the identification $\pi_k(\exp_\bullet(L)) = H_k(NL_\bullet)$. Since $f$ is inverse to $H_*(I)$ and the latter preserves Lie brackets by the previous proposition, the result follows.
\end{proof}

\section{Relation to Quillen's functor}
Quillen \cite{Quillen} defined a functor
$$\lambda\colon \Top_2 \to \DGL_1$$
from the category of simply connected pointed spaces to the category of positively graded dg Lie algebras over $\QQ$ and showed it induces an equivalence after localizing with respect to the rational homotopy equivalences and the quasi-isomorphisms, respectively. If $L$ is a positively graded dg Lie algebra over $\QQ$, then the geometric realization $|\MC_\bullet(L)|$ is simply connected and it is natural to ask whether $\lambda |\MC_\bullet(L)|$ is quasi-isomorphic to $L$. For $L$ of finite type, this statement is easily seen to be equivalent to a conjecture formulated by Baues--Lemaire in 1977 \cite[Conjecture 3.5]{BL}, which was proved by Majewski in 2000 \cite{Majewski}. F\'elix--Fuentes--Murillo \cite{FFM} recently gave a proof that does not require $L$ to be of finite type. However, this proof is indirect as it relies on identifying $\MC_\bullet(L)$ with the realization $\langle L \rangle$ of \cite[\S7]{BFMT} up to homotopy. A corollary of Theorem \ref{thm:mc} and Theorem \ref{thm:Lie Dold Kan} is the following more direct proof.

\begin{theorem}
Let $L$ be a positively graded dg Lie algebra over $\QQ$. There is a natural zig-zag of quasi-isomorphisms of dg Lie algebras
$$\lambda|\MC_\bullet(L)| \leftarrow \cdots \rightarrow L.$$
\end{theorem}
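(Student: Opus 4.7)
The plan is to compute $\lambda|\MC_\bullet(L)|$ by passing through the loop space: Theorem \ref{thm:mc}(2) identifies $\exp_\bullet(L)$ as a simplicial group model for $\Omega|\MC_\bullet(L)|$, so Quillen's loop-group description of $\lambda$ can be computed via $\exp_\bullet(L)$, and Theorem \ref{thm:Lie Dold Kan} then connects the computation to $L$ itself. First note that since $L$ is positively (homologically) graded we have $L^1 = 0$, so $\MC(L) = \{0\}$; by Theorem \ref{thm:mc}(1), $\MC_\bullet(L)$ is a reduced connected Kan complex with canonical basepoint $0$, and $|\MC_\bullet(L)|$ is pointed and simply connected.

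Recall that Quillen's functor factors, up to a natural zig-zag of quasi-isomorphisms, as $\lambda(X) \simeq NP\widehat{\QQ[GSX]}$, where $S$ is the singular simplicial set, $G$ is Kan's loop group, $\widehat{\QQ[-]}$ is the rational group ring completed at the augmentation ideal, $P$ is primitives, and $N$ is normalized chains. Since $\MC_\bullet(L)$ is Kan, the unit $\MC_\bullet(L) \to S|\MC_\bullet(L)|$ is a weak equivalence, and produces a natural quasi-isomorphism $\lambda\MC_\bullet(L) \to \lambda|\MC_\bullet(L)|$. Next, Theorem \ref{thm:mc}(2) applied at $\tau = 0$ (so $L_0 = L$) says that $\gauge_\bullet(L) \to \MC_\bullet(L)$ is a universal principal $\exp_\bullet(L)$-bundle, which yields a natural zig-zag of weak equivalences of simplicial groups between $G\MC_\bullet(L)$ and $\exp_\bullet(L)$. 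Each $\exp_n(L)$ is the Malcev group of the pro-nilpotent $\QQ$-Lie algebra $Z^0\Omega_n(L)$, so the Malcev/Milnor--Moore theorem gives a natural isomorphism of simplicial Lie algebras
\[
P\widehat{\QQ[\exp_\bullet(L)]} \cong Z^0\Omega_\bullet(L),
\]
induced levelwise by $\log$. Applying $N$ and combining with Theorem \ref{thm:Lie Dold Kan} produces a natural quasi-isomorphism $NZ^0\Omega_\bullet(L) \to L$ of dg Lie algebras.

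Concatenating the steps yields the required natural zig-zag
\[
\lambda|\MC_\bullet(L)| \xleftarrow{\simeq} \lambda\MC_\bullet(L) \leftarrow \cdots \rightarrow NZ^0\Omega_\bullet(L) \xrightarrow{\simeq} L.
\]
The main technical hurdle is to check that $\widehat{\QQ[-]}$, $P$, and $N$ convert the zig-zag of simplicial group weak equivalences between $G\MC_\bullet(L)$ and $\exp_\bullet(L)$ into a zig-zag of quasi-isomorphisms of dg Lie algebras. This reduces to the fact that both sides are rational pro-nilpotent (Malcev-complete) simplicial groups, a regime in which these functors form part of Quillen's original equivalence of homotopy categories and are therefore homotopical.
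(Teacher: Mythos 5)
Your overall route coincides with the paper's: pass to the Kan loop group, use Theorem \ref{thm:mc}(2) to identify it with $\exp_\bullet(L)$ up to weak equivalence, identify the primitives of the completed group ring with $Z^0\Omega_\bullet(L)$ via $\exp/\log$, and finish with the quasi-isomorphism $NZ^0\Omega_\bullet(L)\to L$ of Theorem \ref{thm:Lie Dold Kan}. The gap is precisely the step you defer as ``the main technical hurdle'', and the justification you sketch for it does not work. The Kan loop group $\Kangrp\MC_\bullet(L)$ is a levelwise \emph{free} simplicial group, hence is not pro-nilpotent or Malcev complete in any simplicial degree, so the claim that ``both sides are rational pro-nilpotent (Malcev-complete) simplicial groups'' fails for one of the two sides of your zig-zag. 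Consequently you cannot simply apply $\cQQ$ and $\prim$ to the zig-zag of weak equivalences between $\Kangrp\MC_\bullet(L)$ and $\exp_\bullet(L)$ and expect quasi-isomorphisms: the completed group ring is only a \emph{left} Quillen functor, so it preserves weak equivalences between cofibrant simplicial groups, and while $\Kangrp\MC_\bullet(L)$ is cofibrant, $\exp_\bullet(L)$ is not.

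The paper resolves exactly this point with an adjunction maneuver that is missing from your plan. By Quillen's Appendix A there are natural isomorphisms $L_\bullet\cong\prim\cU L_\bullet$ and $\exp(L_\bullet)\cong\glike\cU L_\bullet$ for $L_\bullet = Z^0\Omega_\bullet(L)$, so the weak equivalence coming from Theorem \ref{thm:mc}(2) can be rewritten as a map $\Kangrp\MC_\bullet(L)\to\glike\cU L_\bullet$ whose target lies in the image of the right adjoint $\glike$ of the Quillen equivalence between reduced simplicial groups and reduced simplicial complete Hopf algebras \cite[Theorem II.4.8]{Quillen}. Since the source is cofibrant and $\cU L_\bullet$ is fibrant, the \emph{adjoint} map $\cQQ\Kangrp\MC_\bullet(L)\to\cU L_\bullet$ is a weak equivalence; passing to primitives and applying $N$ then gives $N\prim\cQQ\Kangrp\MC_\bullet(L)\to NL_\bullet\to L$. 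In particular one never needs the levelwise identification $\prim\cQQ[\exp_\bullet(L)]\cong Z^0\Omega_\bullet(L)$ that you assert (which would itself require an argument from Quillen's Malcev theory), nor any homotopy invariance of $\cQQ$ on non-cofibrant objects. A smaller point: you should record that $\MC_\bullet(L)$ is $2$-reduced (for positively graded $L$ one has $\MC_0(L)=\MC_1(L)=\{0\}$), not merely reduced, connected and Kan, since Quillen's comparison defining $\lambda$ is set up between $\Top_2$ and $2$-reduced simplicial sets.
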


\begin{proof}
We adopt Quillen's setup \cite[p.211]{Quillen}. The simplicial set $\MC_\bullet(L)$ is $2$-reduced as $L$ is positively graded, so the unit of the adjunction between $\Top_2$ and the category of $2$-reduced simplicial sets induces a natural quasi-isomorphism 
\begin{equation} \label{eq:lambda}
N\prim \cQQ\Kangrp \MC_\bullet(L) \to \lambda |\MC_\bullet(L)|.
\end{equation}
Our goal is now to construct a natural quasi-isomorphism from the source of \eqref{eq:lambda} to $L$. Let $L_\bullet$ denote the simplicial nilpotent Lie algebra $Z^0\Omega_\bullet(L)$. Theorem \ref{thm:mc}(2) implies that there is a natural weak equivalence of reduced simplicial groups
\begin{equation} \label{eq:loop group}
\Kangrp \MC_\bullet(L) \to \exp(L_\bullet),
\end{equation}
where $\Kangrp$ denotes the Kan loop group (cf.~e.g.~\cite[\S V.5]{GJ}). 
Since $L_\bullet$ is levelwise nilpotent, Corollary 3.9 and (2.7) in Appendix A of \cite{Quillen} give natural isomorphisms
$$L_\bullet \to \prim \cU L_\bullet \xrightarrow{exp} \glike \cU L_\bullet,$$
where $\prim \cU$ and $\glike \cU$ denote primitives and group-like elements, respectively, in the completed universal enveloping algebra. Hence, we may identify the simplicial group $\exp(L_\bullet)$ with $\glike \cU L_\bullet$ and regard the weak equivalence \eqref{eq:loop group} as a map
\begin{equation} \label{eq:loop group rewritten}
\Kangrp \MC_\bullet(L) \to \glike \cU L_\bullet.
\end{equation}
Now, $\glike$ is the right adjoint in a Quillen equivalence between reduced simplicial groups and reduced simplicial complete Hopf algebras (\cite[Theorem II.4.8]{Quillen}). Since all objects in the latter category are fibrant and since Kan loop groups are always cofibrant, this implies that the adjoint of \eqref{eq:loop group rewritten},
$$\cQQ\Kangrp \MC_\bullet(L) \to \cU L_\bullet,$$
is a weak equivalence. This means that the induced map on primitives,
$$\prim \cQQ\Kangrp \MC_\bullet(L) \to \prim\cU L_\bullet \cong L_\bullet,$$
is a weak equivalence (cf.~\cite[Theorem II.4.7]{Quillen}). Applying normalized chains, we obtain natural quasi-isomorphisms
$$N\prim \cQQ\Kangrp \MC_\bullet(L) \to NL_\bullet \to L$$
where the last map is the quasi-isomorphism from Theorem \ref{thm:Lie Dold Kan}.
\end{proof}

\section{Quillen's universal dg coalgebra bundle and the gauge action} \label{sec:quillen}
We end this note with a remark about the relation to Quillen's theory of dg coalgebra bundles \cite[Appendix B \S5--\S6]{Quillen}. The gauge action does not appear explicitly in Quillen's work, but it is implicit in a sense that we will now explain. On p.291, Quillen defines the dg Lie algebra $s L \# L$, whose underlying graded vector space is $sL \oplus L$, where $sL$ is the suspension of $L$ and where the bracket and differential are determined by the equations
$$[sx,sy] = 0,\quad [sx,y] = s[x,y],\quad dsx = x-sdx,$$
and the requirement that $L$ is a dg Lie subalgebra. He then constructs a universal principal dg coalgebra bundle
$$UL \to U(sL\# L) \to \CC L,$$
where $\CC L$ is defined to be $U(sL\# L) \tensor_{UL} \kk$ (and then shown to be isomorphic to the standard complex for computing Lie algebra homology). Observing that $\exp_\bullet(L)$ and $\MC_\bullet(L)$ are isomorphic to the coalgebra realizations $\langle UL \rangle$ and $\langle \CC L\rangle$, respectively (cf.~\cite[\S3.3]{B-fb}), and that $Z^0(sL \# L)$ is isomorphic to $L^0$, one can rediscover the gauge action as the action of the simplicial group
$$\gauge_\bullet(L) = \exp_\bullet(sL\# L) = \langle U(sL\# L) \rangle$$
on the simplicial set
$$\MC_\bullet(L) = \langle \CC L \rangle = \langle U(sL \# L) \tensor_{UL} \kk \rangle$$
induced by the left action of the Hopf algebra $U(sL\# L)$ on $U(sL \# L) \tensor_{UL} \kk$.
In fact, the coalgebra realization of Quillen's universal principal $L$-bundle,
$$\langle UL \rangle \to \langle U(sL\# L) \rangle \to \langle \CC L \rangle,$$
is isomorphic to the universal principal $\exp_\bullet(L)$-bundle
$$\exp_\bullet(L) \to \gauge_\bullet(L) \to \MC_\bullet(L)$$
of Theorem \ref{thm:mc}(2). If $L$ is positively graded and of finite type, this is also isomorphic to the principal fibre bundle constructed in \cite[\S25(b)]{FHT-RHT}, cf.~\cite[\S3.4]{B-fb}.

\subsection*{Acknowledgements}
We thank Thomas Willwacher for prompting the writing of this note and for stimulating discussions.
The author was supported by the Swedish Research Council through grant no.~2021-03946.


\begin{thebibliography}{99}
\bibitem{BL}
H.J. Baues,  J.-M. Lemaire,
{\it Minimal models in homotopy theory}.
Math. Ann. 225 (1977), no. 3, 219--242.

\bibitem{B}
A. Berglund,
{\it Rational homotopy theory of mapping spaces via Lie theory for $L_\infty$-algebras}.
Homology Homotopy Appl. 17 (2015), no. 2, 343--369.

\bibitem{B-rathom}
A. Berglund,
{\it Rational homotopy theory}.
Lecture notes 2012. Available via \texttt{https://staff.math.su.se/alexb/rathom2.pdf}

\bibitem{B-fb}
A. Berglund,
{\it Rational models for automorphisms of fiber bundles}.
Doc. Math. 25 (2020), 239--265.

\bibitem{BFMT}
U. Buijs, Y. F\'elix, A. Murillo, D. Tanr\'e,
Lie models in topology.
Progress in Mathematics, 335. Birkh\"auser/Springer 2020.

\bibitem{Curtis}
E.B. Curtis,
{\it Simplicial homotopy theory}.
Advances in Math. 6 (1971), 107--209 (1971).

\bibitem{FFM}
Y. F\'elix, M. Fuentes, A. Murillo,
{\it There is only one realization of Lie algebras}.
arXiv:2207.10886 [math.AT]

\bibitem{FHT-RHT}
Y. F\'elix, S. Halperin, J-C. Thomas,
Rational Homotopy Theory.
Graduate texts in Mathematics 205, Springer, 2001.

\bibitem{Getzler}
E. Getzler,
{\it Lie theory for nilpotent $L_\infty$-algebras}.
Ann. of Math. (2) {\bf 170} (2009), no. 1, 271--301.

\bibitem{GJ}
P.G. Goerss, J.F. Jardine,
Simplicial homotopy theory. Progress in Mathematics, 174. Birkh\"auser Verlag, Basel, 1999.

\bibitem{GM}
W. Goldman, J.J. Millson,
{\it The deformation theory of representations of fundamental groups of compact K\"ahler manifolds}.
Inst. Hautes \'Etudes Sci. Publ. Math. No. 67 (1988), 43--96.

\bibitem{Hinich}
V. Hinich,
{\it Descent of Deligne groupoids}.
Internat. Math. Res. Notices (1997), no. 5, 223--239.

\bibitem{Hinich-coalg}
V. Hinich,
{\it DG coalgebras as formal stacks}.
J. Pure Appl. Algebra 162 (2001), no. 2-3, 209--250.

\bibitem{LM}
A. Lazarev, M. Markl,
{\it Disconnected rational homotopy theory}.
Adv. Math. 283 (2015), 303--361.

\bibitem{Mac Lane}
S. Mac Lane,
Homology.
Reprint of the 1975 edition. Classics in Mathematics. Springer-Verlag, Berlin, 1995.

\bibitem{Majewski}
M. Majewski,
{\it Rational homotopical models and uniqueness}.
Mem. Amer. Math. Soc. 143 (2000), no. 682.

\bibitem{May}
J.P. May,
Simplicial objects in algebraic topology.
Van Nostrand Mathematical Studies, No. 11 D. Van Nostrand Co., Inc., Princeton, N.J.-Toronto, Ont.-London 1967. 

\bibitem{Pridham}
J.P. Pridham,
{\it Unifying derived deformation theories}.
Adv. Math. 224 (2010), no. 3, 772--826. 

\bibitem{Quillen}
D. Quillen,
{\it Rational homotopy theory}.
Ann. of Math. (2) {\bf 90} (1969) 205--295.

\bibitem{Richter}
B. Richter,
{\it Symmetry properties of the Dold-Kan correspondence.}
Math. Proc. Cambridge Philos. Soc. 134 (2003), no. 1, 95--102. 

\bibitem{RNV}
D. Robert-Nicoud, B. Vallette,
{\it Higher Lie theory}.
 arXiv:2010.10485 [math.AT]
\end{thebibliography}
\end{document}